\documentclass[12pt]{amsart} 
\usepackage{amsthm, amssymb, color}
\usepackage{amsmath}
\usepackage{txfonts}
\usepackage{epsfig,multicol}
\usepackage[dvipdfm]{pict2e}

\theoremstyle{plain} \numberwithin{equation}{section}
\newtheorem{theo}{Theorem}[section]
\newtheorem{theo*}{Theorem}[section]
\newtheorem{coro}[theo]{Corollary}
\newtheorem{prop}[theo]{Proposition}
\newtheorem{prop*}[theo*]{Proposition}
\newtheorem{lemm}[theo]{Lemma}
\newtheorem{lemm*}[theo*]{Lemma}
\newtheorem{defi}[theo]{Definition}

\newtheorem{exam}[theo]{Example}
\newtheorem{claim}[theo]{Claim}

\subjclass[2010]{Primary 05C99}
\keywords{The Catalan number, Dyck path, lattice path}
\thanks{The author was supported by JSPS Research Fellowships for Young Scientists.}
 \begin{document}
\title{Counting generalized Dyck paths}
\author[Y. Fukukawa]{Yukiko Fukukawa} 
\address{Department of Mathematics, Osaka City University, Sumiyoshi-ku, Osaka 558-8585, Japan.}
\email{yukiko.fukukawa@gmail.com} 

\date{\today}
\maketitle
\begin{abstract}
The Catalan number has a lot of interpretations and one of them is the number of Dyck paths. A Dyck path is a lattice path from $(0,0)$ to $(n,n)$ which is below the diagonal line $y=x$. One way to generalize the definition of Dyck path is to change the end point of Dyck path, i.e. we define (generalized) Dyck path to be a lattice path from $(0,0)$ to $(m,n) \in \mathbb{N}^2$ which is below the diagonal line $y=\frac{n}{m}x$, and denote by $C(m,n)$ the number of Dyck paths from $(0,0)$ to $(m,n)$. 
In this paper, we give a formula to calculate $C(m,n)$ for arbitrary $m$ and $n$.   
\end{abstract}
\section{Introduction}
The Catalan number $C_n=\displaystyle \frac{1}{n+1}\binom{2n}{n}$ is one of the most fascinating numbers, and it is known that the Catalan number has more than 200 interpretations. (See \cite{stanley}.)
For example, the number of ways to dissect a convex
$(n + 2)$-gon into triangles, that of binary trees with $(n+1)$ leaves, and that of standard tableaux on the young diagram $(n,n)$ are $C_n$. 
Moreover, one of the most famous interpretations of the Catalan number is the number of Dyck paths from $(0,0)$ to $(n,n)$.
A sequence of lattice points in $\mathbb{Z}^2$ 
$$P=\{ (x_0, y_0), (x_1,y_1) , \cdots , (x_k,y_k) \}$$
is a lattice path if and only if $P$ satisfies the followings for any $i=1,2,\cdots, k$: 
\begin{equation*}
(x_i, y_i)=(x_{i-1}, y_{i-1}+1) \text{ or} \  (x_{i-1}+1, y_{i-1}). 
\end{equation*}
If a lattice path $P=\{ (0,0), (x_1,y_1) , \cdots , (n,n) \}$ lies in the domain $y\leq x$, $P$ is called a Dyck path. 
There are $\binom{2n}{n}$ lattice paths from $(0,0)$ to $(n,n)$, and $C_n$ of them are Dyck paths. 

It is known that the Catalan numbers satisfy the recurrence relation that 
\begin{equation}\label{zennka1}
C_0=1 \quad \text{and} \quad 
C_n=\sum_{i=0}^{n-1}C_iC_{n-1-i}. 
\end{equation}
This recurrence relation also has  many interpretations. 
Hereafter, if a lattice path $P$ from $(0,0)$ to $(m,n) \in \mathbb{N}^2$ lies in the domain $y\leq \frac{n}{m}x$, we call $P$ a Dyck path, and we denote the number of Dyck paths from $(0,0)$ to $(m,n)$ by $C(m,n)$. 
We have a natural question: how many Dyck paths from $(0,0)$ to $(m,n)$ are there for any positive integers $m$ and $n$? 
The followings are answers to this question for special values of $m$ and $n$. 
N. Fuss (\cite{fuss}) found 
\begin{equation}
C(kn,n)=\frac{1}{kn+1}\binom{(k+1)n}{n}.   \label{Cknn}  
\end{equation}
$C(kn,n)$ also has the following recurrence relation: 
\begin{equation}
C(kn,n)=\sum_{(n_1, n_2, \cdots ,n_{k+1})}\prod_{i=1}^{k+1}C(kn_i, n_i),  \label{zennka2} 
\end{equation}
where the sum is taken over all sequences of non-negative integers $(n_1, n_2, \cdots ,n_{k+1})$ such that $\sum_{i=1}^{k+1}n_i=n-1$.
$C(kn,n)$ also appears in various counting problems, like the Catalan number. 
For instance, the number of ways to dissect a convex
$(kn + 2)$-gon into $(k + 2)$-gons is $C(kn,n)$. Actually, N. Fuss gave the formula ~\eqref{Cknn} of $C(kn,n)$ by counting the number of ways to dissect a convex
$(kn + 2)$-gon into $(k + 2)$-gons in ~\cite{fuss}.
P. Duchon \cite{PD} also gave a formula counting the number of Dyck path from $(0,0)$ to $(2\ell,3\ell)$, namely 
\begin{equation}\label{eqduchon}
C(2\ell,3\ell)=\sum_{i=0}^5 \frac{1}{5\ell+i+1}\binom{5\ell+1}{n-i} \binom{5\ell+2i}{i}.
\end{equation}

In this paper, we count $C(m,n)$ for any positive integers $m$ and $n $.
Let $A_{(m,n)}=\frac{1}{m+n}\binom{n+m}{n}$. For any $m$ and $n$, $C(m,n)$ is given by the following theorem.
\begin{theo}\footnote{When the author almost finished writing this paper, she found a paper ~\cite{mtl} which proves Theorem~\ref{intoromain}. But our proof is different from that in ~\cite{mtl}.} \label{intoromain}
Let $d={\rm  gcd}(m,n)$, then
\begin{equation}\label{introeqmain}
C{(m,n)}=\sum_{a }\prod _{i=1}^d \left( \frac{1}{a_i!}A_{(\frac{i}{d}m,\frac{i}{d}n)}^{a_i} \right) ,
\end{equation}
where the sum $\sum_a$ is taken over all sequences of non-negative integers $a=(a_1, a_2, \cdots )$ such that $\sum_{i=1}^{\infty }ia_i =d$.
When ${\rm gcd}(m,n)=1$, ~\eqref{introeqmain} reduces to the following:
\begin{equation}\label{introeqmain2}
C(m,n)=\frac{1}{m+n}\binom{m+n}{n}.
\end{equation}
\end{theo}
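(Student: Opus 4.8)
The plan is to reduce to the coprime case and then establish the single generating‑function identity
\[
\sum_{k\ge 0} C(km',kn')\,t^{k}\;=\;\exp\!\Big(\sum_{j\ge 1} A_{(jm',jn')}\,t^{j}\Big),
\]
where $d=\gcd(m,n)$, $m=dm'$, $n=dn'$ with $\gcd(m',n')=1$, and $C(0,0):=1$. Granting this, Theorem~\ref{intoromain} follows formally: writing the right‑hand side as $\prod_j\exp\!\big(A_{(jm',jn')}t^{j}\big)=\prod_j\sum_{a_j\ge 0}\frac1{a_j!}A_{(jm',jn')}^{a_j}t^{ja_j}$ and reading off the coefficient of $t^{d}$ yields \eqref{introeqmain}, while the $d=1$ term is exactly \eqref{introeqmain2} since $A_{(m',n')}=\frac1{m'+n'}\binom{m'+n'}{n'}$.

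For the coprime base case I would use the cycle lemma. Arranging $m'$ east steps and $n'$ north steps around a circle, the fact that $\gcd(m',n')=1$ forces the arrangement to be aperiodic, so its $m'+n'$ rotations are distinct; exactly one of them — the one begun immediately after the minimum of the "height" $n'x-m'y$ — lies weakly below the line $y=\tfrac{n'}{m'}x$, i.e.\ is a Dyck path. Partitioning the $\binom{m'+n'}{n'}$ words into these necklaces gives $C(m',n')=\tfrac1{m'+n'}\binom{m'+n'}{n'}$.

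For the general case I would first record the elementary \emph{first return to the diagonal} decomposition: a Dyck path to $(dm',dn')$ meets the diagonal only at the points $(im',in')$, and cutting at the first contact gives $C(dm',dn')=\sum_{j\ge 1}P_j\,C((d-j)m',(d-j)n')$, where $P_j$ counts \emph{primitive} Dyck paths to $(jm',jn')$ (meeting the diagonal only at the endpoints); equivalently $\sum_k C(km',kn')t^{k}=\big(1-\sum_j P_jt^{j}\big)^{-1}$. The substantive step is to pin down the $P_j$, which I would attempt by a second, more careful application of the cycle lemma: a primitive Dyck path of size $j$ is an aperiodic binary word of length $j(m'+n')$ whose height function attains its minimum exactly once, while a general word of size $j$ contributes, summed over all words, a total of $j(m'+n')\,C(jm',jn')$ Dyck rotations, the contribution of each word being the number of minima of its height function. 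Stratifying words by the period of their necklace and applying Möbius inversion over the divisors of $j$ should produce the extra relations needed to upgrade $\big(1-\sum P_jt^{j}\big)^{-1}$ to $\exp\!\big(\sum A_{(jm',jn')}t^{j}\big)$; concretely, the target is the recurrence
\[
k\,C(km',kn')=\frac{1}{m'+n'}\sum_{j=1}^{k}\binom{j(m'+n')}{jn'}\,C((k-j)m',(k-j)n').
\]

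I expect this last identification to be the main obstacle. The first‑return decomposition by itself is circular — it delivers only $F=(1-P)^{-1}$ with $P$ a priori unknown — so genuinely new input is required, and that input is the cycle‑lemma count. Its difficulty is that the clean "exactly one good rotation" of the coprime case fails once $\gcd>1$: a single necklace can give rise to several Dyck paths, the number being governed by how often the height function revisits its minimum, so the count must be organized by the period of the necklace and then disentangled by Möbius inversion. Once the recurrence above (equivalently the exponential formula) is proved, the passage to \eqref{introeqmain}, and its specialization \eqref{introeqmain2} at $d=1$, is a routine expansion of the exponential of a power series.
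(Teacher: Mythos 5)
Your skeleton matches the paper's: the coprime case via the cycle lemma is exactly Theorem~\ref{ehhenn} (through Lemmas~\ref{ehhenn1} and~\ref{ehhenn2}), your target recurrence $k\,C(km',kn')=\frac{1}{m'+n'}\sum_{j=1}^{k}\binom{j(m'+n')}{jn'}\,C((k-j)m',(k-j)n')$ is precisely Proposition~\ref{cac}, and the formal passage from it (equivalently from the exponential identity) to \eqref{introeqmain}, with \eqref{introeqmain2} as the $d=1$ case, is the routine expansion the paper performs via the polynomial identity \eqref{coef}.

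However, there is a genuine gap, and you locate it yourself: the recurrence --- equivalently the identity $\sum_k C(km',kn')t^k=\exp\bigl(\sum_j A_{(jm',jn')}t^j\bigr)$ --- is never proved. The first-return decomposition gives only $F=(1-P)^{-1}$ with $P$ unknown, as you admit, and the proposed remedy (``stratify by the period of the necklace and apply M\"obius inversion \dots should produce the extra relations'') is a hope, not an argument; it also aims at a slightly wrong target, since no closed form for the primitive counts $P_j$ is available or needed. What is needed is a second counting identity tying the $A_j$ to the primitives, and this is where the paper does its real work: every rotation class of words contains Dyck paths, all of the same type $a$ (the multiset of sizes of the primitive blocks between diagonal contacts); a class whose Dyck paths have period $r$ consists of $r$ words and contains exactly $r|a|/(m+n)$ Dyck paths (Lemma~\ref{ehhenn1}), so counting all $\binom{k(m'+n')}{kn'}$ words class by class gives $A_d=\sum_{\|a\|=d}\frac{1}{|a|}h(a)D^a$ (Lemma~\ref{ahaD}); combined with the type decomposition $\widetilde{C}_d=\sum_{\|a\|=d}h(a)D^a$ (Lemma~\ref{chaD}, the refinement of your first-return step) and the purely combinatorial identity of Lemma~\ref{hh}, this yields Proposition~\ref{cac}. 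In your generating-function language the class count says $\sum_j A_j t^j=-\log\bigl(1-\sum_j P_j t^j\bigr)=\log F$ outright --- no M\"obius inversion over divisors is required, only the weighted cycle lemma asserting that the number of Dyck rotations of a word equals the number of minima of its cyclic height function, together with the bookkeeping over types or periods. Until you prove a statement of this kind and carry out that bookkeeping, the central identity of the theorem remains unestablished.
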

In fact, we prove~\eqref{introeqmain2} first and then ~\eqref{introeqmain}. 
Actually, a sequence of non-negative integers $a=(a_1, a_2, \cdots )$ in \eqref{introeqmain} characterizes the \lq \lq form" of a Dyck path, and   
it is interesting that $C(m,n)$ is given by using these sequences. 
We will see this in the last section.  
The description of $C(m,n)$ in 
Theorem~\ref{intoromain} is completely different from that of $C(kn,n)$ in \eqref{Cknn} and that of $C(2\ell , 3\ell )$ in \eqref{eqduchon}, 
and we could not deduce \eqref{Cknn} and \eqref{eqduchon} from \eqref{introeqmain} by direct computation. (However we will see that \eqref{Cknn} is a corollary of Theorem~\ref{introeqmain}.) 

The paper is organized as follows. In Section~\ref{sec2} we will treat the case ${\rm gcd}(m,n)=1$ and prove \eqref{introeqmain2}. 
In Section 3 we state a recurrence relation generalizing \eqref{zennka1} and see that Theorem~\ref{intoromain} follows from the recurrence relation. The recurrence relation follows from three lemmas and we prove them in Section 4. 
\\  \  \\
{\bf Acknowledgment.} \quad The author thanks Professor Mikiya Masuda  for helpful conversations and advice, and also thanks Professor Akihiro Munemasa for some useful 
information on the reference.  
\section{The description of $C(m,n)$ when ${\rm gcd}(m,n)=1$.}\label{sec2}
We begin with some notations about a lattice  path.  
We can regard any lattice path $P$ from $(0,0)$ to $(m,n)$ as a sequence of $m$ $x$'s and $n$ $y$'s.
For example, the lattice path from $(0,0)$ to $(5,3)$ in Figure~\ref{ehhennzu1} is $xyxxyxyx$. 
\begin{figure}[h]
\centering{
\begin{minipage}{0.3\hsize}
\includegraphics[width=30mm]{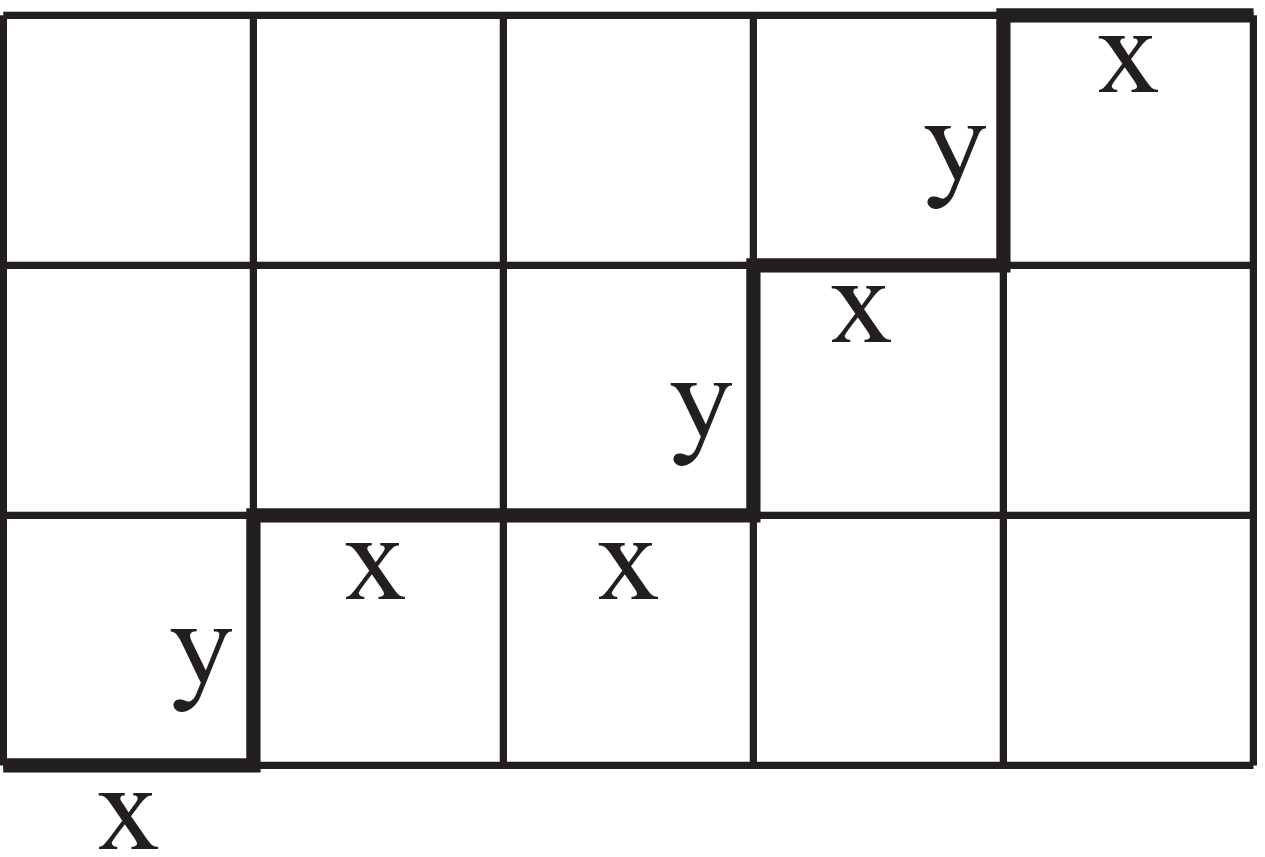}\caption{}\label{ehhennzu1}
\end{minipage}} 
\end{figure}
Hereafter we will treat a lattice path from $(0,0)$ to $(m,n)$ as a sequence of $m$ $x$'s and $n$ $y$'s. 
The number of lattice paths from $(0,0)$ to $(m,n)$ is $\binom{m+n}{n}$. 
\begin{defi}
Two lattice paths $P=u_1 u_2 \cdots u_{m+n}$ and $Q=v_1 v_2 \cdots v_{m+n}$ from $(0,0)$ to $(m,n)$ are equivalent if and only if there is some $1\leq i\leq m+n$ such that $u_{i+1}\cdots u_{m+n}u_1\cdots u_{i}=v_1 v_2 \cdots v_{m+n}$, and we denote the equivalence class of $P$ as $[P]$. 
\end{defi}
 For any lattice path $P=u_1 u_2 \cdots u_{m+n}$ its equivalence class is given by 
$$[P]=\{ P_s := u_{s+1} u_{s+2} \cdots u_{m+n} u_1 u_2 \cdots u_s \mid s= 1,2, \cdots ,m+n \}.$$ 
For example, 
when $P=xyxxy$, the elements in $[P]$ are the following five lattice paths: 
$$P_1= yxxyx, \  \  P_2=xxyxy , \  \  P_3= xyxyx, \  \  P_4 = yxyxx \  \  P=P_5=xyxxy.$$
We define {\bf the period of $P$}, denoted by ${\rm{per}}(P)$, to be the smallest number $r$ $(1\leq r \leq m+n)$ such that $P=P_r$. 
\begin{lemm}\label{ehhenn1}
For a lattice path $P$ from $(0,0)$ to $(m,n)$, $$\sharp [P]= {\rm per}(P)= (m+n)/q \  ,$$
where $q$ is a divisor of ${\rm gcd}(m,n)$. 
In particular $\sharp [P]=m+n$ if ${\rm gcd}(m,n)=1$. 
\end{lemm}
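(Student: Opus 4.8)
The plan is to read everything off from the cyclic group action hidden in the definition of $[P]$. Regard $G=\Z/(m+n)\Z$ as acting on the set of words of length $m+n$ in the alphabet $\{x,y\}$ via the shift $s\cdot P:=P_s$; since $(P_s)_t=P_{s+t}$ with indices read modulo $m+n$, this is a genuine group action, the equivalence class $[P]$ is exactly the orbit of $P$, and the stabilizer $H:=\{s\in G : P_s=P\}$ is a subgroup of $G$. First I would check that $H$ is the cyclic group generated by ${\rm per}(P)$: a subgroup of a finite cyclic group is cyclic, generated by its least positive element $r$, and $r={\rm per}(P)$ by the very definition of the period. In particular $r$ divides $m+n$, since otherwise ${\rm gcd}(r,m+n)$ would be an element of $H$ strictly between $0$ and $r$, contradicting minimality.

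Next, the orbit--stabilizer theorem gives $\sharp[P]=|G|/|H|=(m+n)/((m+n)/r)=r={\rm per}(P)$, which already proves the first equality and shows that $\sharp[P]={\rm per}(P)$ is a divisor of $m+n$; write $q:=(m+n)/r$, so that $\sharp[P]={\rm per}(P)=(m+n)/q$.

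It then remains to prove that $q$ divides ${\rm gcd}(m,n)$. From $P=P_r$ we get $u_i=u_{i+r}$ for every $i$ (indices mod $m+n$); since $r\mid m+n$ this means precisely that $P$ is the concatenation of $q$ copies of the block $B:=u_1u_2\cdots u_r$. Hence, if $B$ contains $m'$ letters $x$ and $n'$ letters $y$, then $m=qm'$ and $n=qn'$, so $q$ divides both $m$ and $n$, i.e. $q\mid{\rm gcd}(m,n)$. Finally, when ${\rm gcd}(m,n)=1$ this forces $q=1$, hence $\sharp[P]={\rm per}(P)=m+n$.

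As for where the work lies: once the group-action viewpoint is in place every step is routine, and the only point needing a moment's care is the implication ``$P$ is fixed by the shift $r$'' $\Rightarrow$ ``$P$ is literally a $q$-fold repetition of a length-$r$ block'', which uses $r\mid m+n$ and is the one genuinely content-bearing observation (it is the reason $q$ must divide ${\rm gcd}(m,n)$, not merely $m+n$). A group-theory-free variant would instead observe directly that the set of $s$ with $P_s=P$ is closed under addition and subtraction modulo $m+n$ and contains $m+n$, hence consists exactly of the multiples of its least positive element $r$, so $r\mid m+n$; counting the $x$'s and $y$'s in the block $B$ then finishes the argument as above.
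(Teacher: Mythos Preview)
Your argument is correct. The underlying content is the same as the paper's---both proofs establish that ${\rm per}(P)\mid m+n$ and then count the $x$'s and $y$'s in the repeating block of length ${\rm per}(P)$ to conclude $q\mid\gcd(m,n)$---but the packaging differs. The paper argues directly: writing $m+n={\rm per}(P)\cdot q+r$ by the division algorithm, it rules out $r\neq 0$ via a B\'ezout identity on the infinite periodic extension $\bar P$ (obtaining $u_i=u_{i+d}$ with $d=\gcd({\rm per}(P),m+n)<{\rm per}(P)$, a contradiction). You instead recognize the cyclic shift as a $\Z/(m+n)\Z$-action, so that $[P]$ is an orbit, the stabilizer is the cyclic subgroup generated by ${\rm per}(P)$, and orbit--stabilizer hands you $\sharp[P]={\rm per}(P)$ and ${\rm per}(P)\mid m+n$ simultaneously. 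Your framing is cleaner and makes the equality $\sharp[P]={\rm per}(P)$ explicit (the paper leaves this implicit), at the cost of invoking a bit of group theory; the ``group-theory-free variant'' you sketch at the end is essentially the paper's own argument.
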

\begin{proof}
Since the lemma clearly holds if ${\rm per}(P)=m+n$,  
we assume that ${\rm per}(P)<m+n$, and let 
$m+n={\rm per}(P)q+r$ ($0 < q, \ 0 \leq r < {\rm per}(P)$). 

Assume that $r$ is zero. Then  
 $P$ is a sequence arranged $u_1 u_2 \cdots  u_{{\rm per}(P)}$ $q$ times and ${\rm per}(P)=(m+n)/q$. 
If the number of $x$ (resp. $y$) in $\{ u_1, u_2, \cdots , u_{{\rm per}(P)} \}$ is $b$ (resp. ${\rm per}(P)-b$), then we have 
$m=bq, \  n=({\rm per}(P)-b)q$. 
Thus, $q$ is a common divisor of $m$ and $n$. 

Assume that $r$ is not zero. 
Let $\bar{P}$ be a sequence arranged $P$ infinitely many times and we treat its indexes as consecutive numbers, namely $$ \bar{P} = u_{1} \cdots u_{m+n} u_{m+n+1} \cdots u_{2(m+n)} u_{2(m+n)+1} \cdots u_{3(m+n)}\cdots$$  
where  
$u_i=u_{(m+n)b +i}$. 
There are positive integers $a$ and $b$ which satisfy ${\rm per}(P)a-(m+n)b=d$, where $d={\rm gcd}({\rm per}(P), m+n)$. 
Therefore, we get the following equation: 
$$u_i=u_{i+{\rm per}(P)a}=u_{i+d+(m+n)b}=u_{i+d},$$
and this equation means $P=P_d.$
Since $r$ is not zero, $d<{\rm per}(P)$. So it is a contradiction to the minimality of ${\rm per}(P)$. 
\end{proof}
\begin{lemm}\label{ehhenn2}
For any lattice path $P$ from $(0,0)$ to $(m,n)$, $[P]$ has at least one Dyck path, and if ${\rm gcd}(m,n)=1$, $[P]$ has a unique Dyck path. 
\end{lemm}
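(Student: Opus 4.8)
The plan is to prove Lemma~\ref{ehhenn2} by a cycle-lemma (lowest-point) argument. First I would translate the geometric Dyck condition into arithmetic. Regard the lattice path $P = u_1 u_2 \cdots u_{m+n}$ as a word in $x$'s and $y$'s, assign weights $w(x) = n$ and $w(y) = -m$, and form the partial sums $S_0 = 0$ and $S_k = \sum_{j=1}^{k} w(u_j)$ for $1 \le k \le m+n$. If the prefix $u_1 \cdots u_k$ consists of $p$ letters $x$ and $q$ letters $y$, then $S_k = np - mq$, so the condition that the lattice point $(p,q)$ lie in the domain $y \le \frac{n}{m}x$ is precisely $S_k \ge 0$; since the line $y = \frac{n}{m}x$ has positive slope it suffices to test the lattice points of $P$, and therefore $P$ is a Dyck path if and only if $S_k \ge 0$ for all $0 \le k \le m+n$. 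Note that $S_{m+n} = nm - mn = 0$.

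Next I would analyze the rotations. The equivalence class is $[P] = \{P_s : 0 \le s \le m+n-1\}$, where we set $P_0 := P$; this is the same list of paths as in the Definition. Extending $S$ periodically --- legitimate because $S_{m+n} = S_0 = 0$ --- the partial sums of $P_s$ are $S_{s+k} - S_s$ for $0 \le k \le m+n$, and as $k$ runs over a full period these are exactly the numbers $S_j - S_s$ with $0 \le j \le m+n-1$. Hence $P_s$ is a Dyck path if and only if $S_s = \min_{0 \le j \le m+n-1} S_j$. A minimum is always attained, so there is at least one such index $s$, and the corresponding $P_s \in [P]$ is a Dyck path; this settles the first assertion.

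For the second assertion, suppose $\gcd(m,n) = 1$; I claim the minimum of $S$ over $\{0,\dots,m+n-1\}$ is attained at a unique index, which by the previous paragraph together with $[P] = \{P_s\}$ yields a unique Dyck path in $[P]$. Indeed, if $S_s = S_t$ with $0 \le s < t \le m+n-1$, then the sub-word $u_{s+1} \cdots u_t$ has total weight $0$; writing $p$ and $q$ for the numbers of $x$'s and $y$'s in it, we get $np = mq$ and $p + q = t - s \ge 1$. Since $\gcd(m,n) = 1$ this forces $m \mid p$ and $n \mid q$, and as $p,q$ are not both zero we obtain $p \ge m$, $q \ge n$, hence $t - s \ge m+n$, contradicting $t - s \le m+n-1$. (Alternatively one could quote Lemma~\ref{ehhenn1}: for $\gcd(m,n)=1$ all $m+n$ rotations are distinct, so minimizers of $S$ are in bijection with the Dyck paths of $[P]$.)

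I expect the main obstacle to be the set-up in the first two paragraphs --- recognizing that the weights $(n,-m)$ turn ``$P$ is a Dyck path'' into ``every partial sum of $P$ is $\ge 0$'' and, after periodic extension, turn ``$P_s$ is a Dyck path'' into ``$S_s$ is the global minimum of the walk $S$''. Once that dictionary is in place the rest is immediate: existence because a minimum exists, and uniqueness (when $\gcd(m,n)=1$) because two distinct minimizers would produce a weight-zero sub-word, which by coprimality would already be a full loop of length $m+n$ and so leave no room inside a single period.
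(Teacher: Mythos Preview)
Your proof is correct and follows essentially the same cycle-lemma (lowest-point) argument as the paper: the paper's function $r(m,n,\cdot)$ is exactly your partial-sum walk $S_k$, and both proofs locate a Dyck rotation at an index where this walk attains its minimum. Your uniqueness argument for the coprime case is a clean arithmetic version of what the paper sketches geometrically via the tangent line of slope $n/m$ touching the infinite repetition $\bar{P}$, but the underlying idea is identical.
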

\begin{proof}
We may assume that $m\geq n$, because $C(m,n)=C(n,m)$. 
We define a function $r$ for any pair of positive numbers $s$ and $t$, and any lattice path $Q$:  
$$r(s,t,Q)=t(\text{the number of $x$ in $Q$})-s(\text{the number of $y$ in $Q$})$$
Let $sub_i(P)$ be a subsequence of $P=u_1 \cdots u_{m+n}$ given by $u_1 u_2 \cdots u_{j_i}$ $(i=1, 2, \cdots ,n)$, where $u_{j_i}$ is the $i^{th}$ $y$ in $P$ from the left. 
Since the Dyck path is a lattice path which is below the diagonal line $y=\frac{n}{m}x$, 
the definition of Dyck path can be described in terms of the function $r$ as follows:  
$$r(m,n, sub_i(P))\geq 0 \quad \quad \text{for any } 1\leq i \leq n.$$

Suppose that a lattice path $P$ from $(0,0)$ to $(m,n)$ is not a Dyck path and 
let $k$ be the positive integer such that the function $r(m,n,\cdot )$ takes the minimum value on $sub_{k}(P)$Cthen $P_k$ is a Dyck path. 
To prove this, we should confirm that $r(m,n, sub_{i}(P_k)) \geq 0 $ $(\text{for any}  \  i)$, but we can see this easily.  
See Figure~\ref{ehhennzu2}. This is the figure of a part of lattice path $\bar{P}$ and the line with the slope $\frac{n}{m}$ which is over $\bar{P}$ and touches $\bar{P}$ at only lattice points. 
For any lattice path $P$, there is a unique such line. To observe $P_s$ for any $P$ is same as to observe some subsequence with length $m+n$ of $\bar{P}$. 
\begin{figure}[h]
\centering{
\includegraphics[width=60mm]{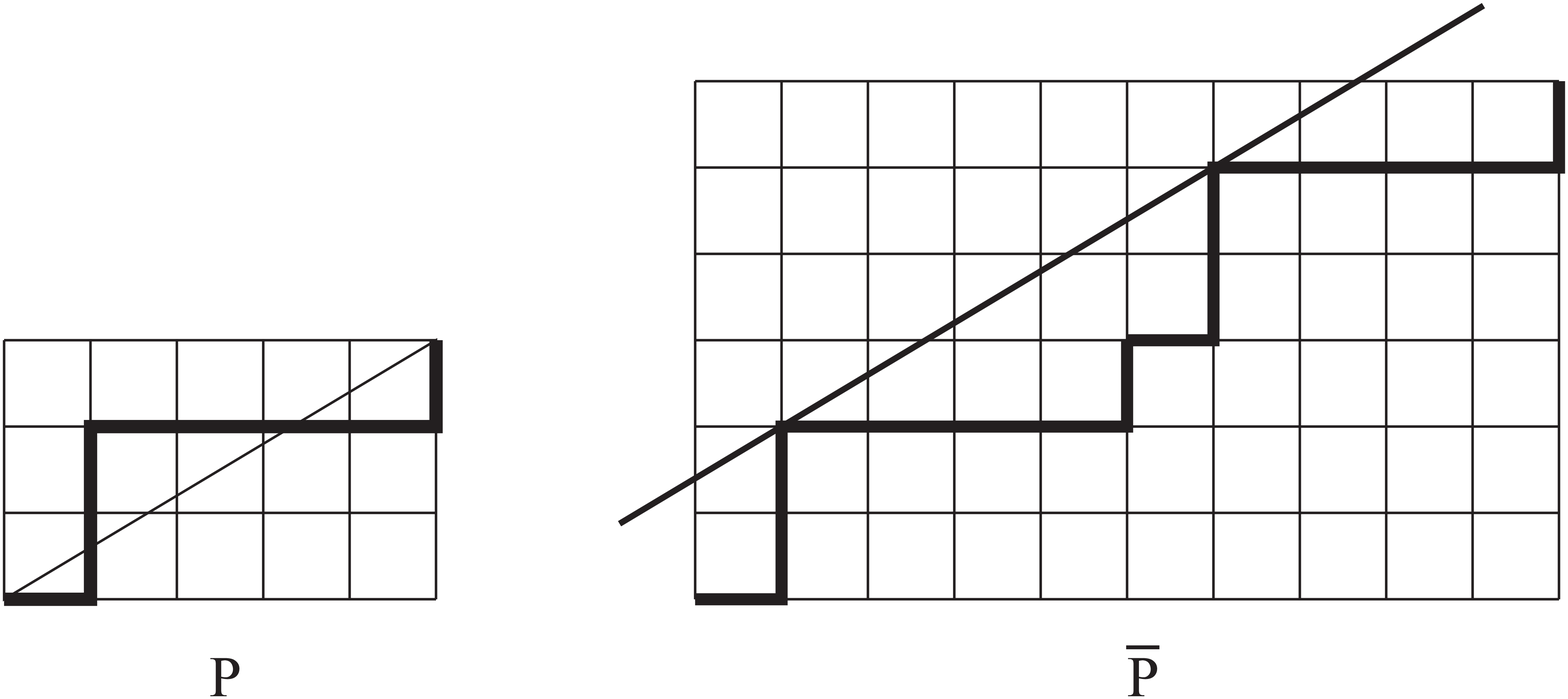}\caption{$(m,n)=(5,3)$}\label{ehhennzu2}
} 
\end{figure}
Choose two common points such that the difference of $x$-coordinates is $m$, and regard those two points as $(0,0)$ and $(m,n)$ from the left, that lattice path is a Dyck path. 
Therefore any lattice path  $P$ has at least one Dyck path in their equivalence class. 
$k$ is  one of $x$-coordinates of common points of $\bar{P}$ and the line. 
When ${\rm gcd}(m,n)=1$, the difference of $x$-coordinates of two adjacent lattice points on that line is $m$, so the Dyck path in $[P]$ is unique, as desired.    
\end{proof}
\begin{theo}\label{ehhenn}
If ${\rm gcd}(m,n)=1$, then 
$$C(m,n)=\frac{1}{m+n}\binom{m+n}{n}.$$
\end{theo}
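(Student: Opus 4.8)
The plan is to combine the two lemmas just proved into a clean double-counting argument. Fix $m,n$ with $\gcd(m,n)=1$, and consider the set $\mathcal{L}$ of all lattice paths from $(0,0)$ to $(m,n)$, so that $|\mathcal{L}|=\binom{m+n}{n}$. The equivalence relation from the Definition partitions $\mathcal{L}$ into equivalence classes $[P]$, and by Lemma~\ref{ehhenn1}, since $\gcd(m,n)=1$, every class has exactly $m+n$ elements. Hence the number of equivalence classes is exactly $\frac{1}{m+n}\binom{m+n}{n}$.

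Next I would invoke Lemma~\ref{ehhenn2}: again using $\gcd(m,n)=1$, each equivalence class $[P]$ contains exactly one Dyck path. This gives a bijection between the set of equivalence classes and the set of Dyck paths from $(0,0)$ to $(m,n)$. Therefore $C(m,n)$, the number of Dyck paths, equals the number of equivalence classes, which is $\frac{1}{m+n}\binom{m+n}{n}$, as claimed.

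Strictly speaking there is a small bookkeeping point worth addressing: one should note that the quantity $\frac{1}{m+n}\binom{m+n}{n}$ is automatically an integer, which is in fact forced by the combinatorial argument itself (it counts a set of objects), so no separate number-theoretic verification is needed. I would also make explicit that $\binom{m+n}{n}$ really is the total count of lattice paths, which the excerpt has already recorded.

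The argument has essentially no hard step left — all the real work was done in Lemmas~\ref{ehhenn1} and~\ref{ehhenn2}. If anything, the only thing requiring a moment's care is making sure the logical chain is stated in the right order: first that all classes have size $m+n$ (so that dividing $\binom{m+n}{n}$ by $m+n$ counts classes), and then that classes biject with Dyck paths (so that this count equals $C(m,n)$). Once both lemmas are in hand, the proof is a two-line consequence, and I would present it as such.
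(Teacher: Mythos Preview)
Your proposal is correct and matches the paper's own proof almost verbatim: the paper likewise partitions the set of all $\binom{m+n}{n}$ lattice paths into equivalence classes, uses Lemma~\ref{ehhenn1} to conclude each class has size $m+n$, and Lemma~\ref{ehhenn2} to conclude each class contains a unique Dyck path, yielding $\binom{m+n}{n}=(m+n)C(m,n)$. The only cosmetic difference is the order in which the two lemmas are invoked.
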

\begin{proof}
We can choose some lattice paths $P^1, P^2, \cdots ,P^t$ from $(0,0)$ to $(m,n)$ such that the set of all lattice paths from $(0,0)$ to $(m,n)$ can be written as the following: 
\begin{equation}\label{ehhenneq1}
\{ \text{All lattice paths from $(0,0)$ to $(m,n)$} \}=[P^1] \sqcup [P^2] \sqcup \cdots \sqcup [P^t]
\end{equation}
Lemma ~\ref{ehhenn2} says that each $[P^i]$ has a unique Dyck path if ${{{\rm gcd}}(m,n)}=1$, so $t=C(m,n)$. 
Comparing the number of elements in both side of ~\eqref{ehhenneq1}, 
\begin{eqnarray*}
\binom{m+n}{n} &=& \mid [P^1] \sqcup [P^2] \sqcup \cdots \sqcup [P^t] \mid \\
               &=& (m+n)t  \quad \quad (\because \text{Lemma~\ref{ehhenn1}})\\
               &=& (m+n)C(m,n).
\end{eqnarray*}
Therefore we have $$C(m,n)=\displaystyle{\frac{1}{m+n}} \binom{m+n}{n},$$ and Theorem~\ref{ehhenn} is proven. 
\end{proof}
It is easy to show that ~\eqref{Cknn} is given as a corollary of Theorem~\ref{ehhenn}. 
\begin{coro}\label{coro1}
$$C(kn,n)=\frac{1}{kn+1}\binom{(k+1)n}{n}.$$
\end{coro}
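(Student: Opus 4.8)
The plan is to reduce to the coprime case settled in Theorem~\ref{ehhenn}. The key observation is that $\gcd(kn+1,n)=\gcd(1,n)=1$, so Theorem~\ref{ehhenn} applies to the point $(kn+1,n)$ and gives $C(kn+1,n)=\frac{1}{(k+1)n+1}\binom{(k+1)n+1}{n}$, which a one-line manipulation of factorials rewrites as $\frac{1}{kn+1}\binom{(k+1)n}{n}$ (both equal $\frac{((k+1)n)!}{n!\,(kn+1)!}$). Hence the corollary will follow once we establish the identity $C(kn,n)=C(kn+1,n)$.

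To prove $C(kn,n)=C(kn+1,n)$ I would exhibit an explicit bijection. Recall from Section~\ref{sec2} that a lattice path to $(m,n)$ is a Dyck path exactly when $m y'\le n x'$ holds at every lattice point $(x',y')$ it visits; for the target $(kn,n)$ this condition reads $k y'\le x'$, and for the target $(kn+1,n)$ it reads $(kn+1)y'\le n x'$. The map sends a Dyck path $P$ to $(kn,n)$ to the path $xP$ obtained by prepending one horizontal step (so each point $(x',y')$ of $P$ becomes $(x'+1,y')$). This is well defined: the two new initial points $(0,0)$ and $(1,0)$ trivially satisfy the constraint, and if $k y'\le x'$ then $(kn+1)y'=kny'+y'\le n x'+y'\le n x'+n=n(x'+1)$, using $y'\le n$.

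For the inverse, note that any Dyck path to $(kn+1,n)$ must begin with a horizontal step, since a first step to $(0,1)$ would violate $(kn+1)\cdot 1\le n\cdot 0$; so every such path has the form $xP$ for a unique lattice path $P$ to $(kn,n)$. Deleting the initial $x$ indeed returns a Dyck path: at a point $(x',y')$ of $P$ we have $(kn+1)y'\le n(x'+1)$, hence $k y'\le x'+1-\tfrac{y'}{n}<x'+1$ when $y'\ge 1$ (and $k y'=0\le x'$ when $y'=0$), so $k y'\le x'$ since both sides are integers. Thus prepending and deleting a horizontal $x$ are mutually inverse bijections, completing the proof. I do not expect a genuine obstacle here; the only thing requiring care is the bookkeeping of which inequality defines a Dyck path for each target point, together with the strict-versus-nonstrict passages above — which is precisely why this is merely a corollary.
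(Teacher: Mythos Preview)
Your proof is correct and follows essentially the same approach as the paper: reduce to the coprime endpoint $(kn+1,n)$ (where Theorem~\ref{ehhenn} applies) via a one-step bijection and then simplify the binomial. The only cosmetic difference is that the paper first uses the symmetry $C(kn,n)=C(n,kn)$ and then shows $C(n,kn)=C(n,kn+1)$ by appending a final vertical step (arguing that no lattice point other than $(n,kn+1)$ lies strictly between the lines $y=kx$ and $y=\tfrac{kn+1}{n}x$), whereas you show $C(kn,n)=C(kn+1,n)$ directly by prepending an initial horizontal step; both bijections are equally valid.
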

\begin{proof}
Note that $C(n,kn)=C(n,kn+1)$ holds. In fact, since ${\rm gcd}(kn+1,n)=1$, the lattice points in the domain $\{(x,y) \mid y \leq \frac{kn+1}{n}x, \  y \geq  kx, \   0 \leq x \leq n \}$ are on the line 
$y= kx$ or $(n , kn+1)$. Namely, all Dyck paths from $(0,0)$ to $(n,kn+1)$ are made by connecting the lattice path from $(n,kn)$ to $(n,kn+1)$ with a lattice path from $(0,0)$ to $(n,kn)$. 
Therefore,  we have
\begin{eqnarray*}
C(kn,n)&=&C(n,kn)=C(n,kn+1)=\frac{1}{(k+1)n+1}\binom{(k+1)n+1}{n} \nonumber \\
      &=&\frac{1}{kn+1}\binom{(k+1)n}{n}. 
\end{eqnarray*}
and this is \eqref{Cknn}.
\end{proof}
\section{The description of $C(m,n)$ for any positive integers $m$ and $n$.}\label{sec3}
In this section, we describe the formula of $C(m,n)$ for any positive integers $m$ and $n$. 
Let $A_{(m,n)}=\frac{1}{m+n}\binom{m+n}{n}$, then the following Proposition holds. 
\begin{prop}\label{cac}
Let $d={\rm gcd}(m,n)$, then 
\begin{equation}\label{zennka3-1}
C{(m,n)} =\sum_{i=1}^d\frac{i}{d}A_{(\frac{i}{d}m,\frac{i}{d}n)} C{(\frac{d-i}{d}m,\frac{d-i}{d}n)}.
\end{equation}
\end{prop}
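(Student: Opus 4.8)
The plan is to recast \eqref{zennka3-1} as a purely enumerative identity and prove it bijectively. Put $\mu=m/d$ and $\nu=n/d$, so ${\rm gcd}(\mu,\nu)=1$ and $(\tfrac{i}{d}m,\tfrac{i}{d}n)=(i\mu,i\nu)$. Since
\[
\frac{i}{d}A_{(\frac{i}{d}m,\frac{i}{d}n)}=\frac{i}{d}\cdot\frac{1}{i(\mu+\nu)}\binom{i(\mu+\nu)}{i\nu}=\frac{1}{m+n}\binom{i(\mu+\nu)}{i\nu},
\]
identity \eqref{zennka3-1} is equivalent to
\[
(m+n)\,C(m,n)=\sum_{i=1}^{d}\binom{i(\mu+\nu)}{i\nu}\,C\!\Big(\tfrac{d-i}{d}m,\tfrac{d-i}{d}n\Big).
\]
The left side counts pairs $(D,\s)$ with $D$ a Dyck path from $(0,0)$ to $(m,n)$ and $\s$ one of its $m+n$ steps. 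On the right, $\binom{i(\mu+\nu)}{i\nu}$ counts all lattice paths from $(0,0)$ to $(i\mu,i\nu)$, and $C(\tfrac{d-i}{d}m,\tfrac{d-i}{d}n)$ counts the Dyck paths from $(i\mu,i\nu)$ to $(m,n)$: the segment from $(i\mu,i\nu)$ to $(m,n)$ again has slope $n/m$, so such a path is just a lattice path staying weakly below $y=\tfrac{n}{m}x$, and translating $(i\mu,i\nu)$ to the origin identifies these with the Dyck paths to $(\tfrac{d-i}{d}m,\tfrac{d-i}{d}n)$. Thus it suffices to biject the two sides.

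First I would record the primitive block decomposition. By the argument of Lemma~\ref{ehhenn1}, the only lattice points on $\{y=\tfrac{n}{m}x,\ 0\le x\le m\}$ are the $d+1$ points $(\tfrac{j}{d}m,\tfrac{j}{d}n)$, $0\le j\le d$. Call a Dyck path \emph{primitive} if it meets $y=\tfrac{n}{m}x$ only at its two endpoints; then every Dyck path $D$ to $(m,n)$ factors uniquely as a concatenation $D=B_1B_2\cdots B_r$ of primitive Dyck paths, with $B_a$ running from $(\tfrac{\l_{a-1}}{d}m,\tfrac{\l_{a-1}}{d}n)$ to $(\tfrac{\l_a}{d}m,\tfrac{\l_a}{d}n)$ for some $0=\l_0<\l_1<\cdots<\l_r=d$. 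Conversely, concatenating a Dyck path to $(\tfrac{d-i}{d}m,\tfrac{d-i}{d}n)$ with a Dyck path to $(i\mu,i\nu)$ placed to begin on the diagonal yields a Dyck path to $(m,n)$, compatibly with block decompositions.

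The heart of the argument is a cyclic counting lemma refining Lemma~\ref{ehhenn2}: for any endpoint $(M,N)$,
\[
\binom{M+N}{N}=\sum_{D\text{ Dyck to }(M,N)}\big(\text{number of steps in the first block of }D\big),
\]
established by an explicit bijection between lattice paths $P$ from $(0,0)$ to $(M,N)$ and pairs $(D,\s)$ with $D$ Dyck to $(M,N)$ and $\s$ a step of the first block of $D$. Given $P$, form the periodic path $\bar P$ and its unique tangent line of slope $N/M$ (as in the proof of Lemma~\ref{ehhenn2}); let $\sigma$ be the last touch point of $\bar P$ occurring at or before the starting point of $P$, rotate $P$ to begin at $\sigma$ — this is a Dyck path $D$ since $\sigma$ is a touch point — and let $\s$ be the step of $D$ coming from the first step of $P$. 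Because $\sigma$ and the next touch point of $\bar P$ delimit the first block of $D$, the step $\s$ lies in it; the inverse rotates $D$ back until $\s$ is its first step. Assembling: given $(D,\s)$, let $B_j$ be the block of $D=B_1\cdots B_r$ containing $\s$, set $i=d-\l_{j-1}$, take $E=B_1\cdots B_{j-1}$ (a Dyck path to $(\tfrac{d-i}{d}m,\tfrac{d-i}{d}n)$), and let $Q$ be the lattice path to $(i\mu,i\nu)$ assigned by the cyclic lemma to $(B_jB_{j+1}\cdots B_r,\s)$, which is a Dyck path to $(i\mu,i\nu)$ with a marked step in its first block; this gives $(D,\s)\mapsto(i,Q,E)$, with inverse obtained by running the cyclic lemma backwards on $Q$ and prepending $E$.

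I expect the cyclic counting lemma to be the main obstacle: one must make ``the tangent line of $\bar P$'', ``touch point'' and ``first block'' precise and verify that rotating to $\sigma$ and rotating back are mutually inverse. It is worth noting that this correspondence does \emph{not} respect the cyclic equivalence classes of Lemma~\ref{ehhenn1} — paths of different period contribute unequally to the sum — so the count cannot be obtained from the class-counting shortcut of Theorem~\ref{ehhenn}, and the bijection must be built directly.
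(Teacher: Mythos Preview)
Your argument is correct and takes a genuinely different route from the paper. You clear denominators to get $(m+n)C(m,n)=\sum_{i=1}^d\binom{i(\mu+\nu)}{i\nu}\,C((d-i)\mu,(d-i)\nu)$ and prove this by a single bijection, the engine being your cyclic counting lemma (lattice paths to $(M,N)$ $\leftrightarrow$ Dyck paths to $(M,N)$ with a marked step in the first primitive block). The paper instead expands both sides in the basis $D^a$ of products of primitive Dyck counts: Lemma~\ref{chaD} gives $\widetilde{C}_d=\sum_{\|a\|=d}h(a)D^a$, Lemma~\ref{ahaD} gives $A_d=\sum_{\|a\|=d}\frac{1}{|a|}h(a)D^a$, and then the identity is reduced to the purely numerical multinomial-type identity of Lemma~\ref{hh}, which is proved by a separate induction. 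Your cyclic lemma is essentially a bijective refinement of Lemma~\ref{ahaD} (summing ``length of first block'' over Dyck paths of type $a$ gives $\frac{m+n}{|a|}h(a)D^a$), but by marking the actual step you can splice in the prefix $E$ directly and bypass both the $D^a$ bookkeeping and Lemma~\ref{hh} entirely. What you gain is a clean, self-contained bijective proof of the recurrence; what the paper's approach buys is that the $D^a$ decomposition and Lemma~\ref{ahaD} are already in place for the proof of Theorem~\ref{main}, so its extra machinery is amortized over the rest of the paper.
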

The proof of the proposition will be given later.

This recurrence relation \eqref{zennka3-1} is a generalization of the recurrence relation ~\eqref{zennka1}. 
In fact, when $m=n$, $d=n$ and \eqref{zennka3-1} reduces to   
$$C_n=\sum_{i=1}^n\frac{i}{n}A_{(i,i)} C_{n-i}. $$
Here a  part of the right hand side, 
$\frac{i}{n}A_{(i,i)} C_{n-i}+\frac{n-i+1}{n}A_{(n-i+1,n-i+1)} C_{i-1}$, is equal to $2C_{n-i}C_{i-1}$, since  
\begin{eqnarray*}
& &\frac{i}{n}A_{(i,i)} C_{n-i}+\frac{n-i+1}{n}A_{(n-i+1,n-i+1)} C_{i-1}\\
&=&\frac{i}{n}\frac{1}{2i}\binom{2i}{i} C_{n-i}+\frac{n-i+1}{n}\frac{1}{2(n-i+1)}\binom{2(n-i+1)}{n-i+1} C_{i-1}\\
&=&\frac{1}{2n}\frac{1}{i}\binom{2(i-1)}{i-1} \frac{2i(2i-1)}{i} C_{n-i}+\frac{1}{2n}\frac{1}{n-i+1}\binom{2(n-i)}{n-i} \frac{2(n-i+1)(2n-2i+1)}{n-i+1}C_{i-1}\\
&=&\left( \frac{2i-1}{n}+\frac{2n-2i+1}{n} \right) C_{n-i} C_{i-1}\\
&=&2C_{n-i}C_{i-1}. 
\end{eqnarray*}
So, \eqref{zennka3-1} is a generalization of ~\eqref{zennka1}.
\begin{defi}
For a sequence of non-negative integers $a=(a_1 , a_2 , \cdots )$, we define $\| a \| $ by 
$$\| a \| = \sum_{i=1}^\infty  ia_i .$$
\end{defi}
The recurrence relation ~\eqref{zennka3-1} leads to the main theorem. 
\begin{theo}\label{main}
Let $d={\rm gcd}(m,n)$, then 
\begin{equation}\label{eqmain}
C{(m,n)}=\sum_{a ; \parallel  a \parallel  =d}\prod _{i=1}^d \left( \frac{1}{a_i!}A_{(\frac{i}{d}m,\frac{i}{d}n)}^{a_i} \right) .
\end{equation}
\end{theo}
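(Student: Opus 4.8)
The plan is to derive \eqref{eqmain} from the recurrence \eqref{zennka3-1} in Proposition~\ref{cac} by induction on $d={\rm gcd}(m,n)$, keeping the ratio $(m/d, n/d)$ fixed throughout. Write $p=m/d$, $q=n/d$, so that for each $j\ge 1$ the pair $(jp, jq)$ has gcd exactly $j$, and set $B_j := A_{(jp,jq)}$ and $f(j) := C(jp, jq)$. Then \eqref{zennka3-1} becomes the scalar recurrence $f(d) = \sum_{i=1}^{d} \tfrac{i}{d} B_i\, f(d-i)$ with $f(0)=1$, and \eqref{eqmain} becomes the claim that $f(d) = \sum_{a : \|a\|=d} \prod_{i\ge 1} \tfrac{1}{a_i!} B_i^{a_i}$. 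This is now a purely combinatorial identity about a single sequence $(B_j)_{j\ge 1}$, with $(m,n)$ playing no further role.

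The cleanest route is generating functions. Introduce the formal power series $F(x) = \sum_{d\ge 0} f(d)\, x^d$ and $G(x) = \sum_{j\ge 1} B_j\, x^j$. First I would check that the proposed closed form is exactly the coefficient extraction from $\exp(G(x))$: indeed $\exp\bigl(\sum_{j\ge1} B_j x^j\bigr) = \prod_{j\ge 1}\exp(B_j x^j) = \prod_{j\ge 1}\sum_{a_j\ge 0}\tfrac{1}{a_j!}B_j^{a_j}x^{j a_j}$, and collecting the coefficient of $x^d$ gives precisely $\sum_{a:\|a\|=d}\prod_i \tfrac{1}{a_i!}B_i^{a_i}$. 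So \eqref{eqmain} is equivalent to $F(x) = \exp(G(x))$ (noting $f(0)=1$ matches the constant term). It then remains to show the recurrence \eqref{zennka3-1} forces $F = \exp(G)$. Multiply the recurrence $d\,f(d) = \sum_{i=1}^d i\,B_i f(d-i)$ (valid for $d\ge 1$) and recognize the two sides: $\sum_{d\ge 1} d f(d) x^d = x F'(x)$, while $\sum_{d\ge 1}\bigl(\sum_{i=1}^d i B_i f(d-i)\bigr)x^d = \bigl(\sum_{i\ge 1} i B_i x^i\bigr)\bigl(\sum_{j\ge 0} f(j)x^j\bigr) = x G'(x) F(x)$. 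Hence $F'(x) = G'(x) F(x)$ with $F(0)=1$, whose unique formal-power-series solution is $F(x) = \exp(G(x))$. That finishes it.

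Alternatively, if one prefers to stay within the language already set up, the same argument can be run by induction on $d$ without ever writing $\exp$: assume \eqref{eqmain} holds for all smaller values (i.e. $f(d-i) = \sum_{b:\|b\|=d-i}\prod_\ell \tfrac1{b_\ell!}B_\ell^{b_\ell}$ for $i\ge 1$), substitute into $f(d)=\tfrac1d\sum_{i=1}^d i B_i f(d-i)$, and match against $\sum_{a:\|a\|=d}\prod_\ell \tfrac1{a_\ell!}B_\ell^{a_\ell}$ by the standard bijection: a partition $a$ of $d$ with $a_i\ge 1$ gives rise, after removing one part of size $i$, to a partition $b=b(a,i)$ of $d-i$ with $b_i = a_i-1$, and conversely; the factor $\tfrac{i}{d}$ together with $\tfrac{1}{(b_i+1)} = \tfrac{1}{a_i}$ accounts exactly for the multiplicity $a_i$ and the weight, after one checks $\sum_{i:\,a_i\ge1} i a_i = d$. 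Concretely, $\tfrac1d\sum_{i} i B_i \cdot \tfrac{1}{(a_i-1)!}B_i^{a_i-1}\prod_{\ell\ne i}\tfrac{1}{a_\ell!}B_\ell^{a_\ell} = \tfrac1d\bigl(\sum_i i a_i\bigr)\prod_\ell \tfrac1{a_\ell!}B_\ell^{a_\ell} = \prod_\ell \tfrac1{a_\ell!}B_\ell^{a_\ell}$.

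The only genuine obstacle is Proposition~\ref{cac} itself — the recurrence \eqref{zennka3-1} — which the excerpt defers; everything above is downstream of it and is essentially bookkeeping once one observes that fixing the coprime pair $(p,q)$ reduces the statement to a one-variable exponential identity. I would also be slightly careful about the degenerate range (ensuring $f(0)=1$ is the right initial condition, matching $C(0,0)=1$ for the empty path, and that the sum in \eqref{eqmain} for $d$ includes the term $a=(0,\dots,0,1,0,\dots)$ with the single $1$ in position $d$, which contributes $A_{(m,n)}$ — the ``indecomposable'' Dyck paths). Beyond that, no step requires more than formal manipulation of power series, so I would present the generating-function version as the main argument and mention the inductive reformulation as a remark.
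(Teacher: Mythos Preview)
Your proposal is correct. In fact, your ``alternative'' inductive argument---substitute the induction hypothesis into the recurrence and compare coefficients of each monomial $\prod_\ell B_\ell^{a_\ell}$ using $\sum_i i a_i = d$---is exactly the paper's proof: the paper states the key step as the polynomial identity
\[
\sum_{i=1}^d \frac{i}{d}\, x_i \Bigl(\sum_{\|a\|=d-i}\prod_{j}\tfrac{1}{a_j!}x_j^{a_j}\Bigr)=\sum_{\|a\|=d}\prod_i \tfrac{1}{a_i!}x_i^{a_i}
\]
and verifies it by the same coefficient comparison you sketch.

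Your primary route via generating functions is a genuinely different packaging. Recognizing the recurrence as the logarithmic-derivative relation $F'=G'F$ and solving $F=\exp(G)$ makes the exponential structure of \eqref{eqmain} transparent in one line, whereas the paper never writes down $F$ or $G$ and instead proves the coefficient identity by hand. The generating-function argument is shorter and more conceptual; the paper's version has the minor advantage of being entirely elementary (no appeal to uniqueness of solutions to $F'=G'F$ in the formal power series ring), and it treats the $A_i$ as independent variables rather than specific numbers, which makes the identity reusable. Both rest equally on Proposition~\ref{cac}, which---as you correctly flag---is the only substantive input.
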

\begin{exam}
Let $m=n$, then we have 
$$C(n,n)=\sum_{a; \| a \|=n} \prod_{i=1}^n \left( \frac{1}{a_i!} A_{(i,i)}^{a_i} \right) $$
by ~\eqref{eqmain}. 
For instance, in the case of $n=3$, the sequences of non-negative integers $a=(a_1, a_2, \cdots )$ with 
$\| a \| =3$ are the following three sequences. 
$$(3, 0, 0, \cdots ), \quad (1,1,0,0, \cdots ), \quad (0,0,1,0,0, \cdots ).$$
Thus, 
\begin{eqnarray*}
C(3,3)&=& \sum_{a; \| a \| =3} \prod_{i=1}^3 \left( \frac{1}{a_i!} A_{(i,i)}^{a_i} \right) \\
      &=& \frac{1}{3!}A_{(1,1)}^3 \\
      & & \quad +\frac{1}{1!}A_{(1,1)}^1 \cdot \frac{1}{1!}A_{(2,2)}^1 +\frac{1}{1!}A_{(3,3)}^1 \\
      &=& \frac{1}{6} + \frac{3}{2} +\frac{20}{6} = 5.
\end{eqnarray*}
As this example shows, each factor $\prod _{i=1}^d \left( \frac{1}{a_i!}A_{(\frac{i}{d}m,\frac{i}{d}n)}^{a_i} \right)$ in the sum is not necessarily an integer. 
\end{exam}
For simplicity, we rewrite Proposition~\ref{cac} and Theorem~\ref{main}.   
\begin{prop}[Proposition ~\ref{cac}]\label{cac'}
Let $p$ and $q$ be  two positive integers with ${\rm gcd}(p,q)=1$. We denote the number of Dyck paths from $(0,0)$ to $(dp,dq)$ (namely, $C(dp,dq)$) by $\widetilde{C}_d$. Likewise, we abbreviate $A_{(dp,dq)}$ as $A_d$. 
Then, we have
\begin{equation*}
\widetilde{C}_d=\sum_{i=1}^d \frac{i}{d}A_i\widetilde{C}_{d-i}.
\end{equation*}
\end{prop}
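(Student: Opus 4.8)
The plan is to prove the equivalent statement obtained from Proposition~\ref{cac'} by clearing denominators, namely
\begin{equation*}
(dp+dq)\,\widetilde C_d \;=\; \sum_{i=1}^{d}\binom{i(p+q)}{iq}\,\widetilde C_{d-i},
\end{equation*}
which is the same identity because $\binom{i(p+q)}{iq}=i(p+q)A_i$ by the definition of $A_i=A_{(ip,iq)}$, so dividing the display by $d(p+q)$ recovers $\widetilde C_d=\sum_{i=1}^d\frac id A_i\widetilde C_{d-i}$. I will read each side as the cardinality of an explicit set. The left side counts pairs $(P,j)$ where $P$ is a Dyck path from $(0,0)$ to $(dp,dq)$ and $j$ marks one of the $dp+dq$ lattice points $v_0,\dots,v_{dp+dq-1}$ that $P$ visits before its endpoint. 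The right side counts pairs $(Q,L)$ where, for some $i$ with $1\le i\le d$, $Q$ is a Dyck path from $(0,0)$ to $((d-i)p,(d-i)q)$ and $L$ is an \emph{arbitrary} lattice path from $(0,0)$ to $(ip,iq)$. The theorem reduces to a bijection between these two sets. Since $\gcd(p,q)=1$, it is worth recording that the only lattice points on $y=\frac qp x$ between $(0,0)$ and $(dp,dq)$ are $(0,0),(p,q),\dots,(dp,dq)$, so the diagonal points a Dyck path may visit are exactly these, and they cut the path into primitive Dyck sub-blocks.

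\emph{Forward map.} Given $(P,j)$, let $(ap,aq)$ be the last diagonal point of $P$ among $v_0,\dots,v_j$, and write $P=QR$ (concatenation) where $Q$ is the initial Dyck path ending at $(ap,aq)$ and $R$ is the remaining Dyck path, re-based at the origin, of size $c:=d-a\ge 1$. By the maximality of $a$, the marked point lies in the first sub-block of $R$; equivalently the offset $j':=j-a(p+q)$ satisfies $0\le j'<\beta(p+q)$, where $\beta$ is the number of diagonal steps spanned by that first sub-block. Cyclically rotating $R$ by $j'$ steps yields a lattice path $L:=R_{j'}$ from $(0,0)$ to $(cp,cq)$, and we set $(P,j)\mapsto(Q,L)$.

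\emph{Inverse map.} Given $(Q,L)$ with $L$ a lattice path from $(0,0)$ to $(cp,cq)$, I use the structure of the cyclic class $[L]$ from Lemmas~\ref{ehhenn1} and~\ref{ehhenn2}: the rotations $L_s$ that are Dyck paths are exactly those with $s$ a contact position of the bi‑infinite path $\bar L$ with its upper supporting line of slope $q/p$, and (this set being nonempty) these contact positions partition the $cp+cq$ cyclic positions of $L$ into consecutive arcs, one running from each contact position up to the next. The basepoint of $L$ lies in a unique such arc, attached to a contact position $s^\ast$; put $R:=L_{s^\ast}$ (a Dyck path from $(0,0)$ to $(cp,cq)$ whose first sub-block is precisely that arc), let $j'$ be the offset of the basepoint inside the arc, and reconstruct the marked Dyck path as $QR$ with its lattice point of index $a(p+q)+j'$ marked, where $a=d-c$. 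The heart of the argument is checking that these two assignments are mutually inverse — in particular that ``last diagonal point of $P$ at or before $v_j$'' on one side matches ``arc containing the basepoint of $L$'' on the other, which amounts to observing that if $L=R_{j'}$ with $0\le j'<\beta(p+q)$ then $cp+cq-j'$ is the contact position whose arc contains $0$. I expect the one genuinely delicate point to be lattice paths $L$ whose cyclic class has period strictly smaller than $cp+cq$: there one must verify that the constraint $0\le j'<\beta(p+q)$ still singles out a unique rotation, which it does precisely because that constraint coincides with arc‑membership and the arcs partition all cyclic positions.

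Granting the bijection, the target set has $\sum_{i=1}^{d}\widetilde C_{d-i}\binom{i(p+q)}{iq}$ elements, hence $(dp+dq)\widetilde C_d=\sum_{i=1}^{d}\binom{i(p+q)}{iq}\widetilde C_{d-i}$; dividing by $d(p+q)$ and using $\binom{i(p+q)}{iq}=i(p+q)A_i$ gives $\widetilde C_d=\sum_{i=1}^{d}\frac id A_i\widetilde C_{d-i}$, which is Proposition~\ref{cac'}. As a sanity check, for $p=q=1$ and $d=2$ the bijection matches the $4\cdot\widetilde C_2=8$ marked Dyck paths to $(2,2)$ with the $\binom21\widetilde C_1+\binom42\widetilde C_0=2+6=8$ pairs on the other side.
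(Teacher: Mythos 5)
Your proposal is correct, and it takes a genuinely different route from the paper. The paper proves Proposition~\ref{cac'} algebraically: it decomposes a Dyck path into primitive blocks, records the multiset of block sizes as the ``type'' $a$, and then establishes Lemma~\ref{chaD} ($\widetilde C_d=\sum h(a)D^a$), Lemma~\ref{ahaD} ($A_d=\sum \frac{1}{|a|}h(a)D^a$) and the purely combinatorial identity Lemma~\ref{hh} for $h$, from which the recurrence falls out by a substitution $c=a+b$ and a telescoping over $B_c^j$. You instead clear denominators to $(dp+dq)\widetilde C_d=\sum_{i=1}^d\binom{i(p+q)}{iq}\widetilde C_{d-i}$ and prove it by a direct double count: marked Dyck paths $(P,j)$ on one side, pairs (Dyck path $Q$, \emph{arbitrary} lattice path $L$) on the other, matched by cutting $P$ at the last diagonal point at or before the mark and cyclically rotating the tail. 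This is a cycle-lemma argument in the spirit of Section~\ref{sec2}: it reuses the supporting-line picture of Lemma~\ref{ehhenn2} but bypasses the $D$, $h$, type machinery entirely. The one point you flag as delicate is indeed the right one, and your resolution is sound: working with contact \emph{positions} of $\bar L$ (rather than with which rotated \emph{paths} are Dyck) makes the arcs partition all $c(p+q)$ cyclic positions even when $L$ has small period, and arc-membership of the basepoint is exactly the constraint $0\le j'<\beta(p+q)$ imposed by maximality of the cut point, so the two assignments invert each other by straightforward rotation arithmetic. What each approach buys: yours is shorter, more elementary, and gives the cleared-denominator recurrence a transparent combinatorial meaning ($\binom{i(p+q)}{iq}$ counting the rotated tail together with the mark); the paper's route is heavier but makes explicit the ``type'' decomposition $\widetilde C_d=\sum_{\|a\|=d}h(a)D^a$, which is the structural statement echoed in the form of the main formula \eqref{eqmain}. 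To turn your sketch into a complete write-up you would only need to write out the mutual-inverse verification you describe (the rotation bookkeeping above), since all the ideas are already in place.
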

\begin{theo}[Theorem ~\ref{main}]\label{main'}
Under the same assumption of Propsition~\ref{cac'}, we have
$$\widetilde{C}_d=\sum_{a; \| a \| =d} \prod_{i=1}^d\left( \frac{1}{a_i!}A^{a_i}_i \right).$$
\end{theo}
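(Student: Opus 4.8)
The plan is to prove Theorem~\ref{main'} by induction on $d$, taking Proposition~\ref{cac'} as given. The base case is $d=0$: here $\widetilde{C}_0 = C(0,0) = 1$ (the empty path), while the right-hand side $\sum_{a;\|a\|=0}\prod_{i}(\frac{1}{a_i!}A_i^{a_i})$ consists of the single term indexed by the zero sequence $a=(0,0,\dots)$, whose associated product is empty and therefore equals $1$. So the two sides agree.

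For the inductive step I would assume the formula holds for every $k<d$ and substitute into the recurrence of Proposition~\ref{cac'}:
\[
\widetilde{C}_d=\sum_{i=1}^d\frac{i}{d}A_i\widetilde{C}_{d-i}=\sum_{i=1}^d\frac{i}{d}A_i\sum_{b;\|b\|=d-i}\ \prod_{j\geq 1}\left(\frac{1}{b_j!}A_j^{b_j}\right).
\]
The next step is to reorganize this double sum by grouping, for each fixed sequence $a$ with $\|a\|=d$, all pairs $(i,b)$ that produce the monomial $\prod_j A_j^{a_j}$. Such a pair arises exactly by choosing an index $i$ with $a_i\geq 1$ and taking $b$ equal to $a$ with its $i$-th entry lowered by one; conversely, every $(i,b)$ occurring in the double sum comes this way from the sequence $a$ obtained from $b$ by raising the $i$-th entry by one, which satisfies $\|a\|=\|b\|+i=d$. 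This identifies the terms of the double sum with the set of pairs $\{(a,i):\|a\|=d,\ a_i\geq 1\}$.

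The computation at the core of the proof is then elementary: for a fixed such $a$ and an admissible $i$,
\[
\frac{i}{d}A_i\prod_{j\geq1}\left(\frac{1}{b_j!}A_j^{b_j}\right)=\frac{i}{d}\cdot\frac{a_i!}{(a_i-1)!}\cdot\prod_{j\geq1}\left(\frac{1}{a_j!}A_j^{a_j}\right)=\frac{i\,a_i}{d}\prod_{j\geq1}\left(\frac{1}{a_j!}A_j^{a_j}\right),
\]
so summing over all $i$ with $a_i\geq 1$ attaches to $\prod_{j}(\frac{1}{a_j!}A_j^{a_j})$ the coefficient $\frac{1}{d}\sum_{i\geq1}i\,a_i=\frac{\|a\|}{d}=1$. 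Hence the double sum collapses to $\sum_{a;\|a\|=d}\prod_{j\geq1}(\frac{1}{a_j!}A_j^{a_j})$, which is the claimed formula, completing the induction.

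I do not expect a serious obstacle here; the one point requiring care is the bookkeeping in the reindexing step, namely verifying that $a\mapsto(a,i)\mapsto(b,i)$ with $b$ obtained from $a$ by decrementing position $i$ really is a bijection onto the index set of the double sum, with nothing double-counted. As a sanity check one can repackage the argument with generating functions: writing $F(x)=\sum_{d\geq0}\widetilde{C}_dx^d$ and $G(x)=\sum_{i\geq1}A_ix^i$, multiplying the recurrence by $d$ shows $xF'(x)=xG'(x)F(x)$, hence $F=\exp G$; expanding $\exp\big(\sum_iA_ix^i\big)=\prod_i\sum_{a_i\geq0}\frac{1}{a_i!}A_i^{a_i}x^{ia_i}$ and reading off the coefficient of $x^d$ gives the theorem. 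The inductive proof above is just the coefficient-wise shadow of this identity, so either presentation works.
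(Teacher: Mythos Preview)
Your proposal is correct and follows essentially the same route as the paper: induction on $d$ using Proposition~\ref{cac'}, then reindexing the double sum so that the coefficient attached to each $\prod_j \frac{1}{a_j!}A_j^{a_j}$ becomes $\frac{1}{d}\sum_i i a_i = 1$. The only cosmetic differences are that the paper takes $d=1$ as the base case and isolates the reindexing step as a polynomial identity in formal variables $x_1,\dots,x_d$ before substituting $x_i=A_i$, whereas you work directly with the $A_i$ and add the generating-function remark $F=\exp G$ as an alternative viewpoint.
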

 
In the rest of this section, we show that Theorem~\ref{main'} follows from Proposition~\ref{cac'}, state three lemmas and prove Proposition~\ref{cac'} using them. 
For that we need some notations. 
\begin{defi}
For a sequence of non-negative integers $a=(a_1 , a_2 , \cdots , )$, $\mid a \mid$, $\ell (a)$ and $h(a)$ are defined by 
\begin{equation*}
\mid a \mid = \sum_{i=1}^\infty  a_i ,  \quad  \ell(a) = \sharp  \{i \mid a_i \not = 0 \} , \quad \text{and} \quad 
h(a)= {\displaystyle \frac{ \mid a \mid ! }{\prod_{i=1}^\infty  a_i ! }} 
\end{equation*}
respectively. 
\end{defi}
If $\| a \| =d < \infty $, $h(a)= {\displaystyle \frac{ \mid a \mid ! }{\prod_{i=1}^d a_i ! }}$. 
Hereafter we always assume $a$ satisfies $\ell(a)<\infty $. 
\begin{defi}
For two sequences of non-negative integers $a=(a_1 , a_2 , \cdots )$ and $c=(c_1 , c_2 , \cdots )$, we define 
$$c\geq a \Leftrightarrow c_i \geq a_i \quad \text{for any } i=1, 2 ,\cdots  $$
$$ c> a \Leftrightarrow c \geq a \quad \text{and} \quad   c_i \not = a_i \text{ \  for some \  } i=1, 2 ,\cdots .$$
Moreover, for $0\leq  j < \mid c \mid $, let $B_c^j$ be 
$$B_c^j:=\{ a \mid a \leq c ,  \mid a \mid =\mid c \mid -j \} .$$
\end{defi}
\begin{defi}
Suppose that $p$ and $q$ are a pair of positive integers with ${\rm gcd}(p,q)=1$, and $d$ is any positive integer. 
Let $D(dp,dq)$ be the number of Dyck paths from  $(0,0)$ to $(dp,dq)$ which is strictly below the diagonal $y=\frac{p}{q}x$ except at $(0,0)$ and $(dp,dq)$, 
and $D(0,0)=1$. 
For any sequence of non-negative integers $a$ with $\| a\| < \infty $, we set $$D^a _{p,q}:=\prod_{i=1}^{\infty}D{(ip,iq)}^{a_i}. $$
If $p$ and $q$ are clear from the context, we abbreviate $D^a_{p,q}$ as $D^a$ and abbreviate $A_{(dp,dq)}$ as $A_d$ as before.  
\end{defi}
\begin{lemm}\label{chaD}
\begin{equation}\label{chaDeq}
\widetilde{C}_d=\sum_{a ; \parallel  a \parallel  =d}h(a)D^a .
\end{equation}
\end{lemm}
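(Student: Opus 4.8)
The plan is to decompose a Dyck path from $(0,0)$ to $(dp,dq)$ according to the lattice points at which it meets the diagonal line $y=\frac{q}{p}x$. Since ${\rm gcd}(p,q)=1$, a lattice point $(x,y)$ with $0\le x\le dp$ lies on this line if and only if $py=qx$, i.e.\ $p\mid x$, which forces $(x,y)=(ip,iq)$ for some $0\le i\le d$. Every Dyck path $P$ from $(0,0)$ to $(dp,dq)$ therefore passes through a subset of the $d+1$ points $(ip,iq)$, and this subset necessarily contains the endpoints $(0,0)$ and $(dp,dq)$; write it as $(j_0p,j_0q),(j_1p,j_1q),\dots,(j_\ell p,j_\ell q)$ with $0=j_0<j_1<\cdots<j_\ell=d$.

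Next I would turn ``cutting $P$ at its contact points'' into a bijection. The arc of $P$ running from $(j_{k-1}p,j_{k-1}q)$ to $(j_kp,j_kq)$, translated by $(-j_{k-1}p,-j_{k-1}q)$, becomes a lattice path from $(0,0)$ to $(s_kp,s_kq)$ where $s_k:=j_k-j_{k-1}\ge 1$. The translation fixes the line $y=\frac{q}{p}x$ (it passes through the origin), so this translated arc still lies weakly below the diagonal, and since $j_{k-1}$ and $j_k$ are \emph{consecutive} indices in the contact set, the arc meets the diagonal only at its two endpoints; thus it is one of the $D(s_kp,s_kq)$ paths in the definition of $D$. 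Conversely, a composition $d=s_1+\cdots+s_\ell$ into positive parts, together with a choice of such a strictly--below path for each index $k$, reassembles by concatenation (undoing the translations) into a unique Dyck path from $(0,0)$ to $(dp,dq)$, the concatenation staying weakly below the diagonal because each piece does. Hence
$$\widetilde{C}_d=\sum_{\ell\ge 1}\ \sum_{\substack{s_1+\cdots+s_\ell=d\\ s_k\ge 1}}\ \prod_{k=1}^{\ell}D(s_kp,s_kq).$$

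Finally I would sort this sum by the ``type'' of the composition $(s_1,\dots,s_\ell)$. Let $a=(a_1,a_2,\dots)$ record $a_i:=\sharp\{k\mid s_k=i\}$; then $\ell=\mid a\mid$, the constraint $s_1+\cdots+s_\ell=d$ is exactly $\|a\|=d$, the number of compositions of a given type $a$ is the multinomial coefficient $\frac{\mid a\mid!}{\prod_i a_i!}=h(a)$, and each such composition contributes $\prod_i D(ip,iq)^{a_i}=D^a$ to the product. Collecting the terms of equal type yields $\widetilde{C}_d=\sum_{a;\ \|a\|=d}h(a)D^a$, which is \eqref{chaDeq}.

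The one step requiring care is the middle claim that, after translation, each cut arc touches the diagonal only at its two endpoints: this is exactly where one uses both that the $j_k$ were taken to be \emph{all} the contact indices of $P$ (so that no interior lattice point of the arc sits on the line) and the translation-invariance of the line $y=\frac{q}{p}x$ through the origin. Everything else — the bijection with pairs (composition, tuple of strictly--below paths) and the passage from compositions to types — is routine combinatorial bookkeeping.
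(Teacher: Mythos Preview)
Your proposal is correct and follows essentially the same approach as the paper's proof: decompose a Dyck path at its contacts with the diagonal $y=\frac{q}{p}x$, observe that the resulting pieces are exactly the ``strictly below'' paths counted by $D$, and then group the resulting compositions of $d$ by their type $a$ to obtain the multinomial factor $h(a)$. Your composition $(s_1,\dots,s_\ell)$ is precisely what the paper calls the \emph{shape} $e$ of the Dyck path, and your argument is, if anything, slightly more explicit about the bijection than the paper's.
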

\begin{lemm}\label{ahaD}
\begin{equation}\label{ahaDeq}
A_d=\sum_{a ; \parallel a \parallel =d}\frac{1}{\mid a \mid}h(a)D^a .
\end{equation}
\end{lemm}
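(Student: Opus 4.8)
The plan is to prove Lemma~\ref{ahaD} by a cycle-lemma (Dvoretzky--Motzkin) argument: one set of pairs is counted in two ways, and the factorization of a Dyck path into primitive pieces that already underlies Lemma~\ref{chaD} reappears here in a ``cyclic'' guise. Throughout keep the standing assumption $\gcd(p,q)=1$, write $N=d(p+q)$, and — as in Section~\ref{sec2} — regard a lattice path $P$ from $(0,0)$ to $(dp,dq)$ as a word $P=u_1u_2\cdots u_N$ in $x,y$, with rotations $P_s=u_{s+1}\cdots u_Nu_1\cdots u_s$. By Lemma~\ref{ehhenn2} at least one of $P_0,\dots,P_{N-1}$ is a Dyck path; any such Dyck rotation $Q$ cuts uniquely at its successive returns to the diagonal $y=\frac{q}{p}x$ (the possible return points being exactly the lattice points $(ip,iq)$, since $\gcd(p,q)=1$) into primitive pieces, each counted by some $D(ip,iq)$. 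Recording, for each $i$, the number $a_i$ of pieces that run a lattice distance $i$ along the diagonal yields a sequence $a$ with $\|a\|=d$; call it the \emph{shape} of $P$, written $a(P)=a$. One must check that $a(P)$ does not depend on the chosen $Q$.

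The heart of the argument is the following key claim: for every lattice path $P$, the number of indices $s\in\{0,1,\dots,N-1\}$ with $P_s$ a Dyck path equals $|a(P)|$. To prove it, put $f(t)=q\cdot(\text{the number of }x\text{'s in }u_1\cdots u_t)-p\cdot(\text{the number of }y\text{'s in }u_1\cdots u_t)$, which is $r(p,q,u_1\cdots u_t)$ in the notation of Lemma~\ref{ehhenn2}, and extend $f$ to all of $\mathbb{Z}$ with period $N$; this is consistent because one full period contributes $q\cdot dp-p\cdot dq=0$. A direct computation (comparing prefixes of $P_s$ with values of $f$) shows that $P_s$ is a Dyck path exactly when $f(s)=\min_{0\le t<N}f(t)$. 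For any Dyck rotation $Q$ of $P$ the arguments attaining this minimum are precisely the returns of $Q$, and their number is the number of primitive pieces of $Q$, i.e.\ $|a(P)|$. This at once shows both that $|a(P)|$ is the number of Dyck rotations of $P$ and that any two Dyck rotations of $P$ differ by a cyclic shift of the list of pieces, so in particular $a(P)$ is well defined.

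Granting the claim, fix $a$ with $\|a\|=d$ and double-count $T_a=\{(P,s):0\le s<N,\ P_s\text{ is a Dyck path of shape }a\}$. Counting over $P$: a path contributes $|a|$ if $a(P)=a$ and nothing otherwise, so $|T_a|=|a|\cdot\sharp\{P:a(P)=a\}$. Counting over $Q:=P_s$: for each Dyck path $Q$ of shape $a$ and each $s$ there is exactly one $P$ (a rotation of $Q$) with $P_s=Q$, and the number of Dyck paths of shape $a$ is $h(a)D^a$ — the factor $h(a)$ ordering the $|a|$ pieces and $D^a$ choosing each piece, exactly the count used in Lemma~\ref{chaD}. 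Hence $|T_a|=N\,h(a)D^a$, so $\sharp\{P:a(P)=a\}=\frac{N}{|a|}h(a)D^a$. Since every lattice path from $(0,0)$ to $(dp,dq)$ has a unique shape of norm $d$,
\[
\binom{d(p+q)}{dq}=\sum_{a;\,\|a\|=d}\sharp\{P:a(P)=a\}=\sum_{a;\,\|a\|=d}\frac{N}{|a|}h(a)D^a,
\]
and dividing by $N=d(p+q)$ gives $A_d=\frac{1}{d(p+q)}\binom{d(p+q)}{dq}=\sum_{a;\,\|a\|=d}\frac{1}{|a|}h(a)D^a$, which is \eqref{ahaDeq}.

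The step I expect to be the real obstacle is the key claim, especially in the range $\gcd(m,n)>1$: there a single lattice path can have several Dyck rotations, and one must be certain they all carry the same shape and that their number is exactly $|a(P)|$, not some multiple; this is precisely where the $N$-periodicity of $f$ does the work. The remaining steps — the two-way count and the passage from the per-shape counts to \eqref{ahaDeq} — are routine bookkeeping with multinomial coefficients.
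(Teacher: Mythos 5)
Your proof is correct, and it rests on the same rotation/double-counting idea as the paper's, but the implementation is genuinely different in how the central count is established. The paper sums, over Dyck paths, the weight $\sharp [P]/(\text{number of Dyck paths in }[P])$, so it must control the rotation classes exactly: it invokes Lemma~\ref{ehhenn1} to get $\sharp [P]={\rm per}(P)=r$, observes that $r$ dividing $m+n$ forces the type to be $\tfrac{m+n}{r}a'$, and introduces the auxiliary objects $cd(a)$ and $E(a,r)$ (Dyck paths of type $a$ and period $r$), using $\sum_{r\in cd(a)}E(a,r)=h(a)D^a$ to reach $(m+n)A_d=\sum_{a}\tfrac{m+n}{\mid a \mid}h(a)D^a$. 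You instead count pairs $(P,s)$ with $P_s$ a Dyck path, so rotations of a word with small period are counted with multiplicity and all the period bookkeeping simply disappears; the one fact you need --- that the number of indices $s$ with $P_s$ a Dyck path equals $\mid a\mid$ --- you obtain from the characterization of Dyck rotations as the minima of the $N$-periodic height function $f$, the minima in one period being exactly the $\mid a \mid$ returns of a Dyck rotation to the diagonal (with $\gcd(p,q)=1$ guaranteeing that returns occur only at the points $(ip,iq)$). A side benefit of your route is that the minimum argument actually proves the assertion the paper states without proof, namely that all Dyck paths in one rotation class have the same type; the price is that you must check the key claim carefully, which you do. Both arguments consume the same input from Lemma~\ref{chaD}, the count $h(a)D^a$ of Dyck paths of a given type, and both arrive at $\binom{d(p+q)}{dq}=\sum_{a;\|a\|=d}\tfrac{d(p+q)}{\mid a\mid}h(a)D^a$ before dividing by $d(p+q)$.
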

\begin{lemm}\label{hh}
For any sequence of non-negative integers $c=(c_1, c_2 , \cdots ) $ and any j with $0 \leq j \leq \mid c \mid -1$, the following holds. 
$$\sum_{a \in B_c^j} \frac{\parallel a \parallel }{\mid a \mid }h(a)h(c-a)=\frac{\parallel c\parallel }{\mid c \mid }h(c).$$
\end{lemm}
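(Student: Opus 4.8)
The plan is to interpret both sides combinatorially in terms of \emph{labelled partitions}. A sequence $a=(a_1,a_2,\dots)$ with $\|a\|=d$ encodes a partition of $d$ into $a_i$ parts equal to $i$; the quantity $h(a)=\frac{|a|!}{\prod a_i!}$ counts the number of ways to arrange the $|a|$ parts in a sequence, i.e. the number of \emph{compositions} of $d$ whose underlying multiset of parts is the one recorded by $a$. So $h(c)$ counts compositions $(\lambda_1,\dots,\lambda_{|c|})$ with part-type $c$, and $\frac{\|c\|}{|c|}h(c)=\frac{d}{|c|}h(c)$ is, since each such composition has $|c|$ slots summing to $d$, the sum over all compositions with part-type $c$ of (average part size); equivalently, by symmetry of the slots, it equals the number of such compositions with a \emph{marked} first slot, weighted by the size of that slot --- better yet, $\frac{d}{|c|}h(c)=\sum_{\text{compositions of part-type }c}\lambda_1$, because by symmetry every slot contributes the same total, so the total over one slot is $\frac1{|c|}$ of the total $\sum_j\lambda_j=d$ over all slots of all compositions... wait, that gives $\sum\lambda_1 = \frac{d\cdot h(c)}{|c|}$ only after noting $\sum_{\text{comps}}\sum_j\lambda_j = d\,h(c)$ and invoking slot-symmetry. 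I will write this identity as: $\frac{\|c\|}{|c|}h(c)$ counts pairs (composition of part-type $c$, a point in the first part), where "point" ranges over $\lambda_1$ choices.

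Next I would set up the left side similarly. For $a\in B_c^j$ we have $|a|=|c|-j$ and $a\le c$, so $c-a\ge 0$ with $|c-a|=j$. The product $h(a)h(c-a)$ counts pairs (composition $\alpha$ of part-type $a$, composition $\beta$ of part-type $c-a$), and then $\frac{\|a\|}{|a|}h(a)h(c-a)$ counts triples (such $\alpha$, such $\beta$, a point in the first part of $\alpha$). Concatenating $\alpha$ then $\beta$ produces a composition $\gamma=(\gamma_1,\dots,\gamma_{|c|})$ of $d$ with part-type $c$ together with a distinguished \emph{cut position} after the $|a|$-th part, where $|a|\ge 1$ runs over $1,\dots,|c|-j$ corresponding to $|c-a|=j$... but $j$ is fixed, so in fact the cut is after position $|c|-j$: the cut position is determined! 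So summing over $a\in B_c^j$ and over $\alpha,\beta$ of the appropriate part-types, with the point in the first part of $\alpha$, is the same as summing over all compositions $\gamma$ of part-type $c$, with a point in $\gamma_1$, where we additionally remember that the part-type splits as $a=$(part-type of $\gamma_1\cdots\gamma_{|c|-j}$) and $c-a=$(part-type of the rest) --- and this bookkeeping is automatically consistent. Hence the left side also counts (composition of part-type $c$, point in its first part), which equals the right side.

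The main obstacle, and where I would spend the most care, is checking that the correspondence is genuinely a bijection: namely that the map sending $(\alpha,\beta)$ (of part-types $a$, $c-a$ with $a\in B_c^j$) to the concatenation $\gamma$ does not lose or duplicate information --- the recovery of $(\alpha,\beta)$ from $\gamma$ uses only the fixed cut position $|c|-j$, so it is a clean bijection, and the "point in the first part" rides along unchanged. One should also handle the edge cases: $j=0$ forces $a=c$, $c-a=0$, $|c-a|=0$, and $D(0,0)$-type empty composition $\beta$, so both sides equal $\frac{\|c\|}{|c|}h(c)$ trivially; and the hypothesis $j\le |c|-1$ guarantees $|a|=|c|-j\ge 1$ so that "first part of $\alpha$" makes sense. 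Alternatively, if the combinatorial argument proves awkward to typeset rigorously, a purely algebraic route works: expand $h(a)=\binom{|a|}{a_1,a_2,\dots}$, write $\frac{\|a\|}{|a|}h(a)=\sum_{k} k\,a_k \cdot \frac{(|a|-1)!}{\prod a_i!}$ (the contribution of parts of size $k$), and use the Vandermonde-type identity $\sum_{a\le c,\,|a|=|c|-j}\binom{|a|-1}{\dots}\binom{|c-a|}{\dots}=\binom{|c|-1}{|c|-j-1}\cdot(\text{multinomial of }c)$ term by term in $k$; summing the $k$-contributions reconstitutes $\frac{\|c\|}{|c|}h(c)$. I would present the combinatorial proof as the main line and mention the algebraic check as a remark.
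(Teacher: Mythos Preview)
Your argument is correct, but it takes a genuinely different route from the paper's proof. The paper proceeds by induction on $j$: it first proves the auxiliary identity $\sum_{c'\in B_c^1}h(c')=h(c)$, uses this to establish a recursion $\sum_{a\in B_c^j}\frac{\|a\|}{|a|}h(a)h(c-a)=\sum_{c'\in B_c^1}\sum_{a\in B_{c'}^{j-1}}\frac{\|a\|}{|a|}h(a)h(c'-a)$, verifies the cases $j=0$ and $j=1$ by hand, and then inducts. Your approach, by contrast, is a single direct bijection with no induction at all: you read $h(a)$ as the number of compositions with part-multiset encoded by $a$, observe that slot-symmetry gives $\frac{\|a\|}{|a|}h(a)=\sum_{\alpha}\alpha_1$ (the sum of the first part over all such compositions), and then note that concatenation at the fixed cut position $|c|-j$ is a bijection between pairs $(\alpha,\beta)$ (of part-types $a$ and $c-a$ with $a\in B_c^j$) and compositions $\gamma$ of part-type $c$, carrying $\alpha_1$ to $\gamma_1$. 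This is cleaner and more conceptual than the paper's computation, and it makes the independence of $j$ transparent (the cut position never affects the count). The paper's route has the minor advantage of being entirely algebraic and self-contained, with no need to introduce compositions, but your bijective argument would be a welcome simplification. Your write-up should be tightened: the stream-of-consciousness ``wait'' passages and the sketch of an algebraic alternative can be dropped once the bijection is stated crisply.
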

We will give the proofs of these lemmas in the next section.
Here we assume that Lemma~\ref{chaD}, Lemma~\ref{ahaD}, and Lemma~\ref{hh} are correct and give the proof of Proposition~\ref{cac'}. 
 \begin{proof}[Proof of Proposition~\ref{cac'}]
Now we fix the pair of positive integers $p$ and $q$ with ${\rm gcd}(p,q)=1$. 
Substituting \eqref{chaDeq} and \eqref{ahaDeq} for the right hand side of Proposition~\ref{cac'}, we have 
\begin{eqnarray}\label{eqdc}
\sum_{i=1}^d\frac{i}{d}A_i \widetilde{C}_{d-i} &=&\frac{1}{d} \sum_{i=1}^d \left( \sum_{\parallel a \parallel =i} \frac{i}{\mid a \mid}h(a)D^{a} \right)\left( \sum_{\parallel b \parallel =d-i} h(b) D^b  \right) \nonumber \\
&=& \frac{1}{d} \sum_{i=1}^d  \left(\sum_{\parallel a \parallel =i} \sum_{\parallel b \parallel =d-i} \frac{i}{\mid a \mid}h(a)h(b) D^{a+b}  \right) \nonumber \\
&=& \frac{1}{d}  \sum_{\parallel c \parallel =d} \left( \sum_{a \leq c } \frac{\parallel a \parallel }{\mid a \mid}h(a)h(c-a)\right) D^{c}, 
\end{eqnarray}
where the last equality in \eqref{eqdc} is given by substituting $c=a+b$. 
Calculating the factor in the right hand side of \eqref{eqdc}: 
\begin{eqnarray*}
\sum_{a \leq c } \frac{\parallel a \parallel}{\mid a \mid}h(a)h(c-a)&=&\sum_{j=0}^{\mid c \mid -1} \sum_{a \in B_c^j}\frac{\parallel a \parallel}{\mid a \mid}h(a)h(c-a)\\
&=&\sum_{j=0}^{\mid c \mid -1} \frac{\parallel c\parallel }{\mid c \mid }h(c)\\
&=&{\parallel c\parallel }h(c).
\end{eqnarray*}
The second equality above follows from Lemma~\ref{hh}, thus we have  
\begin{eqnarray*} 
\sum_{i=1}^d\frac{i}{d}A_i \widetilde{C}_{d-i}&=&\frac{1}{d}  \sum_{\parallel c \parallel =d} \left( \sum_{a \leq c } \frac{\parallel a \parallel }{\mid a \mid}h(a)h(c-a)\right) D^{c}\\
                                              &=& \sum_{c  ; \parallel c \parallel =d} h(c) D^{c}  \\
              &=& \widetilde{C}_d
\end{eqnarray*}
by Lemma~\ref{chaD}. 
Therefore Proposition~\ref{cac'} is proven. 
\end{proof}
Theorem~\ref{main'} follows from Proposition~\ref{cac'}. 
\begin{proof}[Proof of Theorem~\ref{main'}]
Fix the pair of positive integers $p$ and $q$ with ${\rm gcd}(p,q)=1$. We prove Theorem~\ref{main'} by induction on $d$. When $d=1$, since $(m,n)=(1\cdot p ,1\cdot q)$, $C(m,n)=\widetilde{C}_1=A_1$ follows Theorem~\ref{ehhenn}Cthus  Theorem~\ref{main'} holds. Assume that Theorem~\ref{main'} holds for less than or equal to $d-1$. 
Then we have 
\begin{eqnarray}
\widetilde{C}_d&=&\sum_{i=1}^d \frac{i}{d} A_i\widetilde{C}_{d-i} \nonumber \\
&=& \sum_{i=1}^d \frac{i}{d} A_{i} \left( \sum_{a; \parallel  a \parallel  =d-i}\prod_{j=1}^{d-i}\frac{1}{a_j!}A_{j}^{a_j} \right)  \label{eq1}
\end{eqnarray}
by Propsition~\ref{cac'} and the induction assumption.  

\begin{claim}
The following equation holds for $d$ variables $x_1, x_2 , \cdots , x_d$:  
\begin{equation}\label{coef}
\sum_{i=1}^d \frac{i}{d} x_i \left( \sum_{\| a \| =d-i} \prod_{j =1}^{d-i} \frac{1}{a_j !} x_j^{a_j} \right) = \sum_{\| a \| =d} \prod_{i=1}^d \frac{1}{a_i !} x_i^{a_i}. 
\end{equation}
\end{claim}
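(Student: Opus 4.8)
The plan is to verify the polynomial identity \eqref{coef} by comparing coefficients of an arbitrary monomial $\prod_{i=1}^d x_i^{c_i}$ on both sides, where $c=(c_1,\dots,c_d)$ is a tuple of non-negative integers. First I would observe that both sides are supported only on monomials with $\|c\|=\sum i c_i = d$: on the right this is explicit, and on the left the factor $x_i$ times a monomial of weight $d-i$ produces weight exactly $d$. So fix $c$ with $\|c\|=d$ and compare. On the right-hand side the coefficient of $\prod x_i^{c_i}$ is simply $\prod_{i=1}^d \frac{1}{c_i!}$. On the left-hand side, a term indexed by $i$ and by a sequence $a$ with $\|a\|=d-i$ contributes to the monomial $\prod x_j^{c_j}$ precisely when $a = c - e_i$ (where $e_i$ is the $i$-th standard basis vector), which requires $c_i \ge 1$; the coefficient contributed is then $\frac{i}{d}\cdot \frac{1}{(c_i-1)!}\prod_{j\ne i}\frac{1}{c_j!}$.

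Summing the left-hand contributions over all admissible $i$, the coefficient of $\prod x_j^{c_j}$ on the left is
\[
\sum_{i : c_i \ge 1} \frac{i}{d}\cdot\frac{1}{(c_i-1)!}\prod_{j\ne i}\frac{1}{c_j!}
= \left(\prod_{j=1}^d \frac{1}{c_j!}\right)\sum_{i : c_i\ge 1}\frac{i}{d}\cdot c_i
= \left(\prod_{j=1}^d \frac{1}{c_j!}\right)\cdot\frac{1}{d}\sum_{i=1}^d i c_i .
\]
Here I rewrote $\frac{1}{(c_i-1)!} = \frac{c_i}{c_i!}$ and pulled the full product $\prod_j \frac{1}{c_j!}$ out front; the restriction $c_i\ge 1$ can then be dropped since the $i$ with $c_i=0$ contribute nothing to $\sum i c_i$. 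The remaining sum is $\sum_{i=1}^d i c_i = \|c\| = d$, so the factor $\frac{1}{d}\sum i c_i$ equals $1$, and the left-hand coefficient reduces to $\prod_{j=1}^d \frac{1}{c_j!}$, matching the right-hand side. Since $c$ was arbitrary, the two polynomials are equal.

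The only genuine point requiring care — the "main obstacle," such as it is — is the bookkeeping in the reindexing $a \leftrightarrow c - e_i$: one must check that for fixed target monomial $c$ there is exactly one $a$ in the sum $\sum_{\|a\|=d-i}$ hitting it (namely $a=c-e_i$ when $c_i\ge 1$, and none otherwise), and that the weight condition $\|c-e_i\| = d-i$ is automatically satisfied given $\|c\|=d$. Once this correspondence is pinned down, the identity collapses to the elementary fact $\frac{1}{d}\sum_{i} i c_i = 1$, which is just the constraint $\|c\|=d$ in disguise. Note this Claim is exactly the combinatorial engine that upgrades the recurrence in Proposition~\ref{cac'} to the closed product formula in Theorem~\ref{main'}: applied with $x_i = A_i$, equation \eqref{coef} turns \eqref{eq1} into $\sum_{\|a\|=d}\prod_{i=1}^d \frac{1}{a_i!}A_i^{a_i}$, completing the induction.
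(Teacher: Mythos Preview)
Your proposal is correct and follows essentially the same approach as the paper: both arguments compare coefficients of an arbitrary monomial with $\|c\|=d$, identify the contributing terms on the left via the reindexing $a=c-e_i$ (with the convention that the $c_i=0$ case contributes nothing), and reduce to the identity $\frac{1}{d}\sum_i ic_i=1$. The only cosmetic difference is that you handle the support observation up front, whereas the paper separates it out as a second step.
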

\begin{proof}[Proof of the claim]
For any sequence of non- negative integers $$b=(b_1 ,b_2 , \cdots ,b_d , 0 ,0 , \cdots),$$ 
we shall observe the coefficients of $x_{1}^{b_{1}}x_{2}^{b_{2}}\cdots x_{d}^{b_{d}}$ in  the left hand side of ~\eqref{coef}. 
\\ \ 
(1) If $\parallel b \parallel = d$, the term which contains $x_{1}^{b_{1}}x_{2}^{b_{2}}\cdots x_{d}^{b_{d}}$ in the left hand side of \eqref{coef} is 
\begin{equation}\label{eq2}
\sum_{i=1}^d \frac{i}{d} x_i \left( \frac{1}{(b_i-1)!}x_i^{b_i-1} \prod_{j\not= i} \frac{1} {b_j!} x_j^{b_j} \right), 
\end{equation}
where we understand $\frac{1}{(b_i-1)!}x_i^{b_i-1}=0$ if $b_i=0$,  
and \eqref{eq2} is equal to  
\begin{equation*}
\sum_{i=1}^d\frac{ib_i }{d}\left( \prod_{j=1}^d \frac{1}{b_j!}x_j^{b_j} \right) = \left( \frac{1 }{d}\sum_{i=1}^d ib_i \right) \left( \prod_{j=1}^d \frac{1}{b_j!}x_j^{b_j} \right)=\prod_{j=1}^d \frac{1}{b_j!}x_j^{b_j}. 
\end{equation*}
 \  
(2) We shall show that any monomial in the left hand side of \eqref{coef} is of the form $x_{1}^{b_{1}}x_{2}^{b_{2}}\cdots x_{d}^{b_{d}}$ with $\| b \| =d$. 
Any monomial in the left hand side of \eqref{coef} is of the form $x_i \prod_{j =1}^{d-i} x_j^{a_j} $ with $a$ such that $\| a \|= d-i$. 
Set $b=(a_1 , \cdots ,a_{i-1}, a_{i}+1, a_{i +1}, \cdots )$. Then $\parallel b \parallel = i+ \| a \| =i + d-i=d$ 
and $x_i \prod_{j =1}^{d-i}x_j^{a_j}=x_{1}^{b_{1}}x_{2}^{b_{2}}\cdots x_{d}^{b_{d}}$ with $\parallel b \parallel = d$. 
\end{proof}
By substituting $x_i= A_i$ for the left hand side of \eqref{coef}, we have 
$$\sum_{i=1}^d \frac{i}{d} A_i \left( \sum_{a; \parallel  a \parallel  =d-i}\prod_{j=1}^{d-i}\frac{1}{a_j!}A_j^{a_j} \right) = \sum_{b ; \parallel b \parallel =d}\prod_{j=1}^d\frac{1}{b_j!}A_j^{b_j}.$$
Therefore, by \eqref{eq1}, we have
$$\widetilde{C}_d=\sum_{i=1}^d \frac{i}{d} A_{i} \left( \sum_{a; \parallel  a \parallel  =d-i}\prod_{j=1}^{d-i}\frac{1}{a_j!}A_{j}^{a_j} \right) = \sum_{b ; \parallel b \parallel =d}\prod_{j=1}^d\frac{1}{b_j!}A_j^{b_j}, $$
and Theorem~\ref{main'} follows.  
\end{proof}
\section{Proofs of the Lemmas}\label{sec4}
We fix the pair of positive numbers $p$ and $q$ with ${\rm gcd}(p,q)=1$ as before. Let $$m=dp, \quad n=dq.$$
To start with, we give the definitions of {\bf shape} $e=(e_1, e_2, \cdots )$  and {\bf type} $a=(a_1,a_2, \cdots )$ {\bf of the Dyck path } from $(0,0)$ to $(m,n)$. 
Any Dyck path $P$ touches the diagonal $y=\frac{n}{m}x$ at least one point except at $(0,0)$, and coordinates of intersection of $P$ and the diagonal can be described as $(\frac{k}{d}m, \frac{k}{d}n)=(kp,kq)$ for some $k \in \mathbb{Z}_{>0}$ because ${\rm gcd}(m,n)=d$. 
Let all intersection points of $P$ and the diagonal be $(0,0)$, $ (k_1p,k_1q)$, $(k_2p,k_2q)$, $\cdots$, $(k_sp,k_sq)$ from the left. (Namely, $0< k_1 <k_2 <\cdots <k_s=d$.) 
Then, the shape $e=(e_i)_{i \in \mathbb{N}}$ of a Dyck path $P$ is defined by $e_i=k_i-k_{i-1}$ for any non-negative integer $i$, where $k_0=0$ and $k_t=0 \  (t> s) $. 
Furthermore, the type $a=(a_i)_{i\in \mathbb{N}}$ of a Dyck path $P$ is defined by $a_i=\sharp \{e_j \mid e_j=i\}$ for any $i \geq 1$. We denote the type of $P$ by ${\rm{type}}(P)$.  
\begin{proof}[Proof of Lemma~\ref{chaD}]
Suppose that $P$ is a Dyck path form $(0,0)$ to $(m,n)$ of shape $e$ and type $a$. Then  
\begin{eqnarray*}
\| a \| &=& \sum_{i=1}^\infty ia_i= \sum_{i=1}^\infty i\sharp \{ e_j \mid e_j=i \} \\
        &=& \sum_{i=1}^\infty i\sharp \{ k_j \mid k_j-k_{j-1}=i \} \\
        &=& \sum_{i=1}^\infty (k_i- k_{i-1})= -k_0+k_s \\
        &=&  d .
\end{eqnarray*}
Conversely, for any sequence of non-negative integers $a$ with $\| a \| =d$, it is clear that there exists some Dyck path of type $a$ from $(0,0)$ to $(m,n)$. 
The number of Dyck paths of shape $e$ is $\prod_{i=1}^\infty D_{(e_ip,e_iq)}$. If the shapes of two Dyck paths coincide, their types also coincide; so the number of Dyck paths  of type $a$ is 
$$\sum_{e} \prod_{i=1}^\infty D_{(e_ip,e_iq)} = \sum_{e} \prod_{i=1}^\infty D_{(ip,iq)}^{a_i}=h(a)D^a, $$
where the sum $\sum_{e}$ is taken over all $e$ with $\sharp \{e_j \mid e_j=i\} =a_i$ for any $i$. 
This proves Lemma~\ref{chaD}. 
\end{proof}
\begin{proof}[Proof of Lemma~\ref{ahaD}]
If $[P]$ has more than one Dyck path, then types of these Dyck paths coincide. 
Let $P$ be a Dyck path of type $a$ and period $r(\not =m+n)$. Lemma~\ref{ehhenn1} says that $r$ is a divisor of $m+n$, and this means that 
$$\exists a'=(a'_1, a'_2, \cdots ) \quad s.t. \quad  \ a_i=\frac{m+n}{r} a'_i \quad \forall i \in \mathbb{N}.$$
Namely, any $a_i$ is divisible by $(m+n)/{r}$. 
Let $cd(a) $ be the set of all $r$ such that $(m+n)/{r}$ divides all $a_i$, in other words, $r$ which can be the period of some Dyck path with type $a$.  
For any Dyck path $P$ with type $a$ and period $r$, the number of lattice paths in $[P]$ is $r$ and that of Dyck paths in $[P]$ is $\mid a' \mid =r \mid a \mid /{( m+n)}$ by Lemma~\ref{ehhenn1}. 
Let $E(a,r)$ be the number of Dyck paths with type $a$ and period $r$. 
Counting the number of all lattice paths from $(0,0)$ to $(m,n)$, we have 
\begin{eqnarray*}
(m+n)A_{(m,n)}&=& \sum_{a  ;   \| a \| =d }\sum_{r ;  r\in cd(a)} \sum_{P; {\rm{type}}(P)=a, {\rm{per}}(P)=r} \frac{\sharp [P]}{(\text{Number of Dyck paths in $[P]$})}\\
              &=& \sum_{a  ;   \| a \| =d }\sum_{r ;  r\in cd(a)}  E(a,r)\frac{r}{\frac{r \mid a \mid }{ m+n}} \\
              &=& \sum_{a   ;   \| a \| =d} \sum_{r ;   r\in cd(a)} \frac{m+n}{\mid a \mid } E(a,r)\\
              &=& \sum_{a ; \| a \| =d }\frac{m+n}{\mid a \mid } \left( \sum_{r; r\in cd(a)} E(a,r)\right).
\end{eqnarray*}
We know $h(a)D^a=\sum_{r; r\in cd(a)}E(a,r)$, thus,
\begin{equation*}
(m+n)A_{(m,n)}=\sum_{a; \| a \| =d }\frac{m+n}{\mid a \mid } h(a)D^a.
\end{equation*}
Lemma~\ref{ahaD} is proved. 
\end{proof}
\begin{proof}[Proof of Lemma~\ref{hh}]
We begin with the following claim.  
\begin{claim}\label{lemha}
$$\sum_{a \in B_c^j} \frac{\parallel a \parallel }{\mid a \mid }h(a)h(c-a)=\sum_{c' \in B_c^1}\sum_{a \in B_{c'}^{j-1}}\frac{\parallel a \parallel }{\mid a \mid }h(a)h(c'-a)$$
\end{claim}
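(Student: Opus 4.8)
The plan is to prove the identity by reversing the order of summation on the right-hand side and then collapsing the resulting inner sum with an elementary multinomial identity. We may assume $j \geq 1$ (otherwise $B_{c'}^{j-1}$ is undefined), and for an index $k \geq 1$ I write $\delta_k$ for the sequence of non-negative integers having a $1$ in the $k$-th slot and $0$'s elsewhere.

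First I would identify the set of pairs $(c',a)$ summed on the right-hand side: such a pair satisfies $a \leq c' \leq c$, $\mid c' \mid = \mid c \mid - 1$, and $\mid a \mid = \mid c' \mid - (j-1) = \mid c \mid - j$. Since $\mid c' \mid = \mid c \mid - 1$ and $c' \leq c$, exactly one entry of $c$ drops by one on passing to $c'$, so $c' = c - \delta_k$ for a unique $k$ with $c_k \geq 1$. For a fixed $a$ — which is then automatically an element of $B_c^j$, as $a \leq c' \leq c$ and $\mid a \mid = \mid c \mid - j$ — the admissible $c'$ are exactly the $c - \delta_k$ for those $k$ with $a_k < c_k$, and the inequality $a_k < c_k$ already forces $c - \delta_k \geq 0$, so no further condition intervenes. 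Because the factor $\frac{\parallel a \parallel}{\mid a \mid}h(a)$ does not depend on $c'$, reorganizing the double sum gives
$$\sum_{c' \in B_c^1}\sum_{a \in B_{c'}^{j-1}}\frac{\parallel a \parallel }{\mid a \mid }h(a)h(c'-a)=\sum_{a \in B_c^j}\frac{\parallel a \parallel }{\mid a \mid }h(a)\sum_{k\,:\,a_k<c_k} h(c-\delta_k-a).$$

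Next I would evaluate the inner sum. Setting $b := c - a$, one has $\mid b \mid = j \geq 1$ and $\{k : a_k < c_k\} = \{k : b_k \geq 1\}$, and for each such $k$ the definition of $h$ yields $h(b-\delta_k) = \frac{(\mid b \mid -1)!}{(b_k-1)!\prod_{i\neq k}b_i!} = \frac{b_k}{\mid b \mid}\,h(b)$. Hence, since the indices with $b_k = 0$ contribute nothing,
$$\sum_{k\,:\,b_k\geq 1} h(b-\delta_k)=\frac{1}{\mid b \mid}\Bigl(\sum_{k\geq 1} b_k\Bigr)h(b)=h(b)=h(c-a).$$
Plugging this into the preceding display turns the right-hand side into $\sum_{a\in B_c^j}\frac{\parallel a\parallel}{\mid a\mid}h(a)h(c-a)$, which is exactly the left-hand side, and the claim follows.

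The computation is essentially mechanical, so I do not expect a real obstacle; the only point needing care is the reindexing in the first step — verifying that $k \mapsto c-\delta_k$ is an exact bijection between $\{k : a_k < c_k\}$ and the set of $c' \in B_c^1$ with $a \leq c'$ (no double counting, and non-negativity of $c'$ for free once $a$ is fixed). The multinomial identity $h(b-\delta_k) = \frac{b_k}{\mid b \mid}h(b)$ together with $\sum_k b_k = \mid b \mid$ are immediate.
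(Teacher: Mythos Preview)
Your proof is correct and follows essentially the same approach as the paper: both swap the order of summation on the right-hand side to make $a$ the outer variable, and then collapse the inner sum using the multinomial identity $\sum_{k:\,b_k\geq 1} h(b-\delta_k)=h(b)$ (which the paper records separately as $\sum_{c'\in B_c^1} h(c')=h(c)$, while you derive it inline via $h(b-\delta_k)=\tfrac{b_k}{|b|}h(b)$). The only cosmetic difference is your explicit $\delta_k$ parametrisation of $B_c^1$ versus the paper's enumeration by the nonzero coordinates of $c$.
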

\begin{proof}[Proof of the claim]
Recall $\ell(c)=\# \{ i \mid c_i \not = 0 \}$. 
For any sequence of non-negative integers $c$, 
the elements in $B_c^1$ are the following $\ell(c)$ sequences:  
$$1 \leq  \forall t \leq \ell(c), \quad c^t:=(0, \cdots ,0 ,c_{s_1} ,\cdots , c_{s_t}-1, 0, \cdots ,c_{s_{\ell(c)}}, 0 , \cdots )$$
where $c_{s_t}$ is the $t^{th}$ nonzero number in $c$ from the left. 
Then, we have 
\begin{eqnarray}\label{lemeq1}
\sum_{c'\in B_c^1}h(c')&=& \sum_{t=1}^{\ell (c)} h(c^t) \nonumber \\
                       &=& \sum_{t=1}^{\ell (c)} \frac{(\mid c \mid -1)!}{\prod_{i=1}^{\ell(c)}(c_{s_{i}}!)/c_{s_t}} \nonumber \\
                       &=& \frac{\mid c \mid !}{\mid c \mid } \frac{1}{\prod_{i=1}^{\ell(c)}(c_{s_{i}}!)}\sum_{t=1}^{\ell (c)} c_{s_t} \nonumber \\
                       &=& \frac{\mid c \mid !}{\prod_{i=1}^{\ell(c)}(c_{s_{i}}!)}=h(c)
\end{eqnarray} 
We note that  
$$a \in B_c^j \Longleftrightarrow a \in B_{c'}^{j-1} \text{for some \ }c' \in B_c^1.$$
Therefore we have 
\begin{eqnarray*}
\sum_{c' \in B_c^1}\sum_{a \in B_{c'}^{j-1}}\frac{\parallel a \parallel }{\mid a \mid }h(a)h(c'-a)&=&\sum_{a \in B_c^j}\frac{\| a\| }{\mid a\mid}\left( \sum_{c'\in B_c^1 s.t. c'>a}h(c'-a)\right)h(a)\\
                                                                                                  &=&\sum_{a \in B_c^j}\frac{\| a\| }{\mid a\mid}\left( \sum_{b \in B_{c-a}^1} h(b)\right)h(a)\\
                                                                                                  &=&\sum_{a \in B_c^j}\frac{\| a\| }{\mid a\mid}h(c-a)h(a).
\end{eqnarray*}
The last equation above holds by ~\eqref{lemeq1}. 
Therefore Claim~\ref{lemha} is proved.
\end{proof} 
We go back to the proof of Lemma~\ref{hh}. 
We prove by induction on $j$.  Lemma~\ref{hh} clearly holds for $j=0$. 
We have $$B_c^1=\{ c^t=(0, \cdots ,0 ,c_{s_1} ,\cdots , c_{s_t}-1, 0, \cdots ,c_{s_{\ell(c)}}, 0 , \cdots  ) \mid 1 \leq  \forall t \leq \ell(c) \},$$
 and then 
\begin{eqnarray*}
\sum_{a \in B_c^1} \frac{\parallel a \parallel }{\mid a \mid }h(a)h(c-a)&=&\sum_{t=1}^{\ell(c)}\frac{\parallel c^t \parallel }{\mid c^t \mid}h(c^t)h(c-c^t)\\
                                                                        &=&\sum_{t=1}^{\ell(c)}\frac{\parallel c \parallel -s_t }{\mid c \mid -1}\frac{(\mid c \mid -1)!}{(\prod_{i=1}^{\ell(c)}c_{s_{i}}!)/c_{s_t}}\cdot 1\\
                                                                        &=&\sum_{t=1}^{\ell(c)}\frac{\| c \| -s_t}{\mid c \mid -1}\frac{\mid c \mid !}{\mid c \mid}\frac{c_{s_t}}{\prod_{i=}^{\ell(c)}c_{s_{i}}!}\\
                                                                        &=& \left( \sum_{t=1}^{\ell(c)}c_{s_t}\| c \| - \sum_{t=1}^{\ell(c)}s_tc_{s_t} \right)\frac{1}{\mid c \mid (\mid c \mid -1)}h(c)\\
                                                                        &=&(\| c \| \mid c \mid -\| c \| )\frac{1}{\mid c \mid (\mid c \mid -1)}h(c)\\
                                                                        &=&\frac{\| c \| }{\mid c \mid}h(c).
\end{eqnarray*}
Thus, Lemma~\ref{hh} holds for $j=1$. 
Assume that $j\geq 2$ and Lemma~\ref{hh} holds for $j-1$. 
By Claim~\ref{lemha}, we have  
\begin{eqnarray*}
\sum_{a \in B_c^j} \frac{\parallel a \parallel }{\mid a \mid }h(a)h(c-a)&=&\sum_{c' \in B_c^1}\sum_{a \in B_{c'}^{j-1}}\frac{\parallel a \parallel }{\mid a \mid }h(a)h(c'-a)\\
                                                                        &=&\sum_{c' \in B_c^1}\frac{\parallel c' \parallel }{\mid c' \mid}h(c')\\
                                                                        &=&\frac{\parallel c \parallel }{\mid c \mid}h(c), 
\end{eqnarray*}
so Lemma~\ref{hh} also holds for $j$.  
\end{proof}


\begin{thebibliography}{9}
  \bibitem{fuss}
  N. Fuss, \emph{Solio quastionis, quot modis polygonum n laterum in polyga m laterum, per diagonales
resolvi qu\ae at}, Nova acta academi\ae scientarium Petropolitan\ae 9 (1791), 243-251.
 \bibitem{stanley}
R.~P. Stanley, 
http:\slash\slash{}www-math.mit.edu\slash\~{}rstan\slash ec\slash catadd.pdf 
  \bibitem{PHJP}
   P. Hilton and J. Pedersen, \emph{Catalan Numbers, Their Generalization, and Their Uses}, 
The Mathematical Intelligencer,
Spring 1991, Volume 13, Issue 2, pp 64-75.  
   \bibitem{PD}
P. Duchon, \emph{On the enumeration and generation of generalized Dyck words}, Discrete Mathematics, Volume 225, Issues 1-3, 28 October 2000, pp 121-135.
 \bibitem{mtl}
 M. T. L. Bizley, \emph{Derivation of a new formula for the number of 
minimal lattice path  from $(0, 0)$ to $(km, kn)$ having
just contacts with the line $my=nx$ and having
no points above this line; and a proof of Grossman's
formula for the number of paths which
may touch but do not rise above this line}, 
Journal of the Institute of Actuaries (1954) 80, 31 December 1954, pp 55-62. 
  \end{thebibliography}
 \end{document}